\renewcommand{\@biblabel}[1]{#1.}
\theoremstyle{definition}
\newtheorem{dfn}{DEFINITION}[section]
\theoremstyle{plain}
\newtheorem{thm}{THEOREM}[section]
\newtheorem{lem}{LEMMA}[section]
\newtheorem{prop}{PROPOSITION}[section]
\newtheorem{cor}{COROLLARY}[section]
\theoremstyle{remark}
\newtheorem{rem}{REMARK}[section]
\newcommand{\ha}{\widehat{\mathfrak{h}}}
\newcommand{\Sy}[1]{{\rm S}( {#1}{}_- )}
\newcommand{\cal}{\mathcal}
\title[The intermediate vertex subalgebras]
{The intermediate vertex subalgebras of the lattice vertex operator algebras}
\author{Kazuya KAWASETSU}
\address{Department of Mathematical Sciences, University of Tokyo, Komaba, Tokyo, 153-8914, Japan.}
\email{kawasetu@ms.u-tokyo.ac.jp}
\begin{document}

\begin{abstract}
A notion of {\it intermediate vertex subalgebras} of lattice vertex operator algebras is introduced, as a generalization of the notion of {\it principal subspaces}.
Bases and the graded dimensions of such subalgebras are given.
As an application, it is shown that the characters of some modules of an intermediate vertex subalgebra between $E_7$ and $E_8$ lattice vertex operator algebras satisfy some modular differential equations.
This result is an analogue of the result concerning the ``hole" of the {\it Deligne dimension formulas} and the {\it intermediate Lie algebra} between the simple Lie algebras $E_7$ and $E_8$.
\end{abstract}

\subjclass[2010]{17B69,17B67,17B25,11P81,11F22}
\keywords{vertex operator algebra, modular invariance, modular differential equation, Deligne dimension formula,
intermediate Lie algebra, exceptional Lie algebra, principal subspace.}

\maketitle

\section{Introduction}

The {\it Deligne exceptional series of simple Lie algebras} is the series
\[
A_1\subset A_2 \subset G_2 \subset D_4 \subset F_4 \subset E_6 \subset E_7 \subset E_8
\]
of simple Lie algebras \cite{D}.
For irreducible components of some tensor products of the adjoint representations of the simple Lie algebras in the above exceptional series,
remarkable dimension formulas, called {\it Deligne dimension formulas}, were established \cite{CdM,D,LaM2}.
They are expressed as rational functions in the dual Coxeter number $h^\vee$.
For example,
\[
\dim \mathfrak{g} = \frac{2(5h^\vee -6)(h^\vee +1)}{h^\vee +6}
\]
and
\[
\dim \mathfrak{g}^{(2)} = \frac{5h^{\vee 2}(2h^\vee+3)(5h^\vee-6)}{(h^\vee+6)(h^\vee+12)}.
\]

When $h^\vee=24$, which intermediates between the dual Coxeter numbers
$18$ of $E_7$ and $30$ of $E_8$, the formulas give integer values $\dim\mathfrak{g}=190$ and 
 $\dim\mathfrak{g}^{(2)}=15504$.
However there is no such a simple Lie algebra.

Later, this ``hole" of the exceptional series was filled in.
In \cite{LaM1}, a Lie algebra $E_{7+1/2}$, which is non-reductive and intermediates between $E_7$ and $E_8$, was constructed,
and the dimension formulas for this algebra were proved.
The Lie algebra $E_{7+1/2}$ is an
{\it intermediate Lie algebra} \cite{LaM1,S1,GZ1}.

The same exceptional series appeared in earlier studies of modular differential equations.
In 1988, Mathur, Mukhi and Sen, in their work of classification of rational conformal field theories ($C_2$-cofinite rational vertex operator algebras (VOAs) of CFT-type) with two characters \cite{MMS},
studied the modular differential equations of the form
\begin{equation}\label{eq:diff1}
\left( q\frac{d}{dq}\right)^2 f(\tau) + 2 E_2(\tau)\left(q\frac{d}{dq}\right)f(\tau) + 180\mu\cdot E_4(\tau)f(\tau)=0.
\end{equation}
Here $\mu$ is a numerical constant, $\tau$ a complex number in the complex upper half-plane $\mathbb{H}$ with $q=e^{2\pi i \tau}$, and $E_k(\tau) (k=2,4,6,\ldots)$ the Eisenstein series
\[
E_k(\tau)=-\frac{B_k}{k!}+\frac{2}{(k-1)!}\sum_{n\geq 1}\frac{n^{k-1}q^n}{1-q^n},
\]
where $B_k$ is the $k$-th Bernoulli number.
(Differential equations equivalent to (\ref{eq:diff1}) were studied by Kaneko and Zagier \cite{Kan3} in number theory.)
By studying (\ref{eq:diff1}), they showed, roughly speaking, that the characters of the rational conformal field theories with two characters are that of the
level one affine VOAs associated to the Deligne exceptional simple Lie algebras.
The list obtained is shown in Table \ref{tb:table1}.
($c$ denotes the central charge, $h$ the non-zero conformal weight, and $\dim V_1$ the dimension of the weight one subspace of such a theory.)
When $\mu=11/900$ and $551/900$, there are solutions of the differential equations (\ref{eq:diff1}) of the form
$
f(\tau)= q^{-c/24} \sum_{n=0}^\infty a_n q^n
$
with $a_0=1$, $a_n\in \mathbb{Z}_{> 0}$  ($n=1,2,3,\ldots$) and $c=2/5$ and $38/5$.
However, according to the Verlinde formula, there are no rational conformal field theories of central charges $c=2/5$ and $38/5$ with two characters.
(In \cite{MMS}, the Virasoro minimal model at $c=-22/5$ was assigned to the case $\mu=11/900$.
The characters agree with the famous {\it Rogers-Ramanujan functions}.)
Note that the value $c=38/5$ intermediates between the central charges $c=7$ of  $L_{1,0}(E_7)$ and $c=8$ of $L_{1,0}(E_8)$.
Here, $L_{1,0}(\mathfrak{g})$ is the level one affine VOA associated to a  simple Lie algebra $\mathfrak{g}$.
Note also that actually $a_1$ is $190$ and agrees with  $\dim E_{7+1/2}=190$.

\begin{table}
\begin{center}
 \label{tb:table1}
\begin{tabular}{|l|r|c|c|l|} \hline
$\mu$ & $\dim V_1$ & $c$ & $h$ & Identification \\ \hline \hline
$11/900$ & $1$ & $2/5$ & $1/5$ &  \\ \hline
$5/144$ & $ 3 $ & $ 1 $ & $1/4$ & $L_{1,0}(A_1)$ \\ \hline
$1/12$ & $ 8 $ & $2$ & $1/3$ & $L_{1,0}(A_2)$\\ \hline
$119/900$ & $14$ & $14/5$ & $2/5$ &$L_{1,0}(G_2)$\\ \hline
$2/9$ & $28$ & $4$ & $1/2$ &$L_{1,0}(D_4)$\\ \hline
$299/900$ & $52$ & $26/5$ & $3/5$ &$L_{1,0}(F_4)$\\ \hline
$5/12$ & $78$ & $6$ & $2/3$ &$L_{1,0}(E_6)$\\ \hline
$77/144$ & $133$ & $7$ & $3/4$ &$L_{1,0}(E_7)$\\ \hline
$551/900$ & $190$ & $38/5$ & $4/5$ &  \\ \hline
$2/3$ & $248$ & $8$ & $(5/6)$ & $L_{1,0}(E_8)$\\ \hline
\end{tabular}
\caption{The characters of the RCFTs with ``two" characters and $h\geq 0$}
\end{center}
\end{table}

Motivated by the above works, we construct an $\mathbb{N}$-graded vertex algebra and a module with the ``characters" satisfying the equation (\ref{eq:diff1}) with $\mu=551/900$, 
by considering the analogy of the intermediate Lie algebras.
We consider the vertex subalgebra
\begin{equation}\label{eq:e712affine}
\langle E_{7+1/2} \rangle_{{\rm v.a.}} \subset L_{1,0}(E_8)
\end{equation}
(and a module).
Here,  $\langle E_{7+1/2}\rangle_{{\rm v.a.}}$ denotes the smallest vertex subalgebra containing $E_{7+1/2}$.

The vertex algebra $\langle E_{7+1/2} \rangle_{{\rm v.a.}}$ is isomorphic to an {\it intermediate vertex subalgebra} of a lattice VOA, which we introduce in this paper.
For the purpose, we establish the formula to describe the graded dimensions of such subalgebras and modules.

The intermediate vertex subalgebra $W(R,S)$ is by definition the vertex subalgebra generated by 
the subset $\{e^\rho, e^{\pm \sigma}, \sigma_{-1}\otimes 1 |  \rho\in R,\sigma\in S\}$ of the lattice VOA $V_L$ associated with an integral lattice $L$.
Here, $R$ and $S$ are disjoint subsets of a $\mathbb{Z}$-basis $B$ of $L$.
(See DEFINITION \ref{sec:def1} and \ref{sec:def2}, for more detail.)

The notion of the intermediate vertex subalgebras is a generalization of the notion of the {\it principal subspaces} introduced by Feigin and Stoyanovsky \cite{FS,SF}.
A principal subspace is the subspace
\[
W(\Lambda)=U(\bar{\mathfrak{n}})\cdot v_\Lambda
\]
of a standard $A_n^{(1)}$-module $L(\Lambda)$, where $\mathfrak{n}$ is the nilradical of a Borel subalgebra of $sl_{n+1}$.
When $n=1$, the graded dimensions of $W(\Lambda_0)$ and $W(\Lambda_1)$ agree with the Rogers-Ramanujan functions.
The notion clearly extends to an arbitrary highest weight module for an affine Lie algebra.
The principal subspaces were studied in \cite{CLM2, CalLM3, G, AKS,CoLM,P1,FFJMM} and others.

Recently, Milas and Penn considered the lattice VOA $V_L$ and the vertex subalgebra
$W_L(B)=\langle e^{\beta_1},\ldots,e^{\beta_n} \rangle _{{\rm v.a.}}$, 
 called the principal subalgebra \cite{MP}.
This is a generalization of the principal subspaces of level one standard modules over simply-laced simple Lie algebras.
Combinatorial bases and the graded dimensions of the subalgebra $W_L(B)$ and some modules were given in \cite{MP}.

Note that the principal subalgebra $W_L(B)$ agrees with the intermediate vertex subalgebra $W(B,\emptyset)$.
By using the bases of the principal subalgebras and modules, we construct combinatorial bases and give the formula to compute the graded dimensions of the intermediate vertex subalgebra $W(R,S)$ and {\it intermediate modules}.

We use the formula to study the character of the vertex subalgebra 
(\ref{eq:e712affine}).
We consider the lattice VOA $V_{E_8}$ associated with the $E_8$ root lattice and the intermediate vertex subalgebra $V_{E_{7+1/2}}=W(\{\alpha_1\},\{\alpha_2,\ldots,\alpha_8\})$.
This vertex algebra is isomorphic to the vertex algebra (\ref{eq:e712affine}) via the isomorphism $V_{E_8}\cong L_{1,0}(E_8)$.
Next, we consider the intermediate module $V_{E_{7+1/2}+\alpha_1}$.
Then we will show that the characters of the subalgebra and module form a basis of the solutions of the modular differential equation (\ref{eq:diff1}) with $\mu=551/900$.

Note that by means of Tuite's result in \cite{T2}, which was motivated by the work of Matsuo \cite{Mat}, our result can be thought of as a conformal field theory version of filling in the ``hole" of the exceptional series.

In section 2, we recall the definition of the lattice VOAs and introduce the notion of the intermediate vertex subalgebras and intermediate modules.
We then describe the structures and give the formula to compute the graded dimensions.
In section 3, we consider the intermediate vertex subalgebra $V_{E_{7+1/2}}$ and the module $V_{E_{7+1/2}+\alpha_1}$.
We compute the characters and show that they form a basis of the solutions of (\ref{eq:diff1}) with $\mu=551/900$.
For the purpose, we decompose the characters into the form of polynomials in the characters of the modules of some VOAs.
In section 4, we prove the structure theorem of the intermediate vertex subalgebras and modules.

\section{The intermediate vertex subalgebras}

\subsection{The setting}
\label{sec:setting}
Let $L$ be a rank $n$ non-degenerate integral lattice with the $\mathbb{Z}$-bilinear form $\langle\cdot , \cdot \rangle: L\times L\rightarrow \mathbb{Z}$.
Let  $B=\{ \beta_1,\ldots,\beta_n\}$ be a $\mathbb{Z}$-basis of $L$.

Put $\mathfrak{h}=\mathbb{C}\otimes_\mathbb{Z} L$. 
Consider the affine central extension $\widehat{\mathfrak{h}}=\mathfrak{h}\otimes \mathbb{C}[t,t^{-1}]\oplus \mathbb{C}{\bf k}$ and its irreducible induced module
\[
M(1)=U(\widehat{\mathfrak{h}}) \otimes_{U(\mathfrak{h} \otimes \mathbb{C}[t]\oplus \mathbb{C}{\bf k})} \mathbb{C},
\]
where $\mathfrak{h}\otimes \mathbb{C}[t] $ acts trivially and ${\bf k}$ acts as $1$ on the one-dimensional module $\mathbb{C}$.
The space $M(1)$ can be identified with the symmetric algebra ${\rm S}(\widehat{\mathfrak{h}}_-)$, where
\[
\widehat{\mathfrak{h}}_-=\mathfrak{h}\otimes t^{-1}\mathbb{C}[t^{-1}].
\]
Consider the corresponding lattice vertex operator (super)algebra (lattice VOA)
\[
V_L\cong M(1) \otimes \mathbb{C}[L]
\]
\cite{B}.
Recall that the vertex operator is given by the following formula:
\[
Y(e^\alpha,x)=\sum_{m\in \mathbb{Z} } (e^\alpha)_m x^{-m-1} = E^- (-\alpha,x) E^+ (-\alpha,x)e_\alpha x^\alpha.
\]
Here
\[
e_\alpha \cdot (h\otimes e^\beta )=\epsilon (\alpha , \beta )h\otimes e^{\alpha+\beta}, \ \ \ e^{\beta} \in \mathbb{C} [L], h\in M(1),
\]
\[
E^-(-\alpha,x)=\exp \left(-\sum_{j<0} \frac{x^{-j} } {j} \alpha_j \right),
\]
and
\[
E^+(-\alpha,x)=\exp \left(-\sum_{j>0} \frac{x^{-j} } {j} \alpha_j \right).
\]

For $B'\subset B$,
we set
\[
L(B')=\bigoplus_{\beta \in B'} \mathbb{Z}\beta
\ \ \ \ \mbox{and} \ \ \ \ L_+(B')=\bigoplus_{\beta \in B'} \mathbb{Z}_{\geq 0} \beta.
\]
Then $L(B')$ is a sublattice, and $L_+(B')$ is a submonoid of $L$.
Furthermore, we set $\mathfrak{h}(B')$, $\widehat{\mathfrak{h}}(B')$ and $\widehat{\mathfrak{h}}(B')_-$
and consider the lattice VOA $V_{L(B')}$ as a vertex operator subalgebra of $V_L$.

Let $R$ and $S$ be disjoint subsets of $B$.
Let $R\sqcup S$ denote the disjoint union.
Put $r=|R|$ and $s=|S|$.
We arrange the indices of the basis so that
$
R=\{\beta_1,\ldots,\beta_r \}$ and $ S=\{ \beta_{r+1},\ldots, \beta_{r+s}\}.
$
We put $\rho_i=\beta_i$ for $1\leq i \leq r$ and $\sigma_j=\beta_{r+j}$ for $1\leq j \leq s$.
That is, 
\[
R=\{ \rho_1,\ldots,\rho_r \} \ \ \ \mbox{and} \ \ \ S= \{ \sigma_1,\ldots,\sigma_s \}.
\]
Then we set
\[
L(R,S)= L_+(R) \oplus L(S).
\]
This is a submonoid of $L$.

Let $\langle A \rangle_{{\rm v.a.}}$ denote the smallest vertex subalgebra containing the subset $A$ of $V_L$.

\begin{dfn}
\begin{em}
\label{sec:def1}
The (weak) {\it intermediate vertex subalgebra} $W(R,S)$ of $V_L$ associated with $(R,S)$ is the vertex subalgebra
\[
W(R,S) = \langle e^{\rho}, e^{\pm \sigma}, \sigma_{-1}\otimes 1 | \rho\in R, \sigma \in S \rangle_{{\rm v.a.}}.
\]
\end{em}
\end{dfn}

We set $W(R)=W(R,\emptyset)$.

Let $L^\circ$ denote the dual lattice of $L$.
Consider a $V_L$-module $V_{L^\circ}=M(1)\otimes \mathbb{C}[L^\circ]$.
Let $\lambda$ be an element of $L^\circ$.

\begin{dfn}
\begin{em}
\label{sec:def2}
The (weak) {\it intermediate module} $W(R,S;\lambda)$ over $W(R,S)$ is the cyclic $W(R,S)$-module
\[
W(R,S;\lambda) = W(R,S) \cdot e^\lambda \subset V_{L^\circ}.
\]
\end{em}
\end{dfn}

We set $W(R;\lambda)=W(R,\emptyset;\lambda)$.

\begin{rem}
\begin{em}
Put $L'=L(R)$ and $L''=L(S)$.
The intermediate vertex subalgebra $W(\emptyset,S)$ agrees with the lattice VOA $V_{L''}$ associated with the lattice $L''$.
On the other hand, the intermediate vertex subalgebra $W(R)=W(R,\emptyset)$ is the principal subalgebra $W_{L'}(R)$,
 and the intermediate modules $W(R;\lambda)=W(R,\emptyset;\lambda)$ agree with the principal subspaces $W_{L'+\lambda}(R)$ \cite{MP}.
Moreover, if $L'$ is an ADE root lattice, $W(R)$ and $W(R;\lambda_i)$ correspond to the level one principal subspaces 
$W (\Lambda_0)$ and $W(\lambda_i)$, studied in \cite{CalLM3}.
Here, $\lambda_i$ $(i=1,\ldots,n)$ are the fundamental weights of the root system of $L'$.
\end{em}
\end{rem}

Now let us define some (bi-)gradings on the above vertex algebras and modules.
Put $V=V_{L^\circ}$.
First, take the conformal vector $\omega$ in $V_L$ and the stress-energy tensor
\[
Y(\omega,z)=T(z)=\sum_{i\in \mathbb{Z}}L(i) z^{-i-2}.
\]
Then, the operator $L(0)$ on $V$ is diagonalizable, and the eigenvalues are rational numbers.
 Let $V_r$ denote the $r$-eigenspace.
The grading $V=\bigoplus_r V_r$ is called the {\it weight grading}.
For $v\in V_r$, we call $r$ the {\it weight} of $v$ and write $r={\rm wt}(v)$.
For any vector subspace $X$ of $V$, we set $X_r=X\cap V_r$.
Then the vector spaces $V_L$, $W(R,S)$ and $W(R,S;\lambda)$ are graded vector subspaces of $V$.
We call the restricted grading the weight grading.

Next,
take an element $\tau$ of $L^\circ$.
Consider the subspace
\[
V^\tau=M(1)\otimes e^\tau.
\]
Then consider the subspaces
\begin{eqnarray*}
X^\tau = X\cap V^\tau, \ \ \ \ (X=V_L, W(R,S)),
\end{eqnarray*}
and
\begin{equation*}
(W(R,S;\lambda))^\tau=W(R,S;\lambda)\cap V^{\tau+\lambda}.
\end{equation*}
Let $X$ be one of the vector spaces $V, V_L, W(R,S)$ and $W(R,S;\lambda)$.
The grading $X=\bigoplus_\tau X^\tau$ is called the {\it charge-grading}.
For any $v\in X^\tau$, we call $\tau$ the {\it charge} of $v$.
If $X^\tau\neq 0$, we call $\tau$ a {\it charge} of $X$.
Note that the set of the charges of $V_L$ agrees with $L$, and that of $W(R,S)$ and $W(R,S;\lambda)$ agree with $L(R,S)$.

Note that for any charge $\tau$, the subspace $X^\tau$ is $L(0)$-invariant, and the weight grading and charge grading are compatible. 
Consider the bi-grading $X=\bigoplus_{r,\tau} X^\tau_r$, where $X^\tau_r$ denote the subspace $X^\tau \cap X_r$.
We call this the {\it charge and weight grading}.

Note that as the bi-graded vector spaces, $W(R,S;0)=W(R,S)$.

\subsection{Structure of the intermediate vertex subalgebras and modules}

Recall that $W(R)=W(R,\emptyset)$ is a principal subalgebra \cite{MP}.
The following theorem is our main result.

\begin{thm}\label{sec:str}
The module $W(R,S;\lambda)$ decomposes as an $\Sy{\ha(S)}\otimes W(R)$-module into the form
\begin{equation}\label{eqn:streqn1}
W(R,S;\lambda) \cong \Sy{\ha(S)}\otimes \bigoplus_{\delta \in L(S)} W(R;\delta+\lambda).
\end{equation}
\end{thm}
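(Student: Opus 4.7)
The plan is to construct an explicit linear map
\[
\Phi\colon \Sy{\ha(S)}\otimes \bigoplus_{\delta\in L(S)} W(R;\delta+\lambda)\longrightarrow W(R,S;\lambda),\qquad \Phi(f\otimes v)=f\cdot v,
\]
where $f\in\Sy{\ha(S)}=S(\widehat{\mathfrak{h}}(S)_-)$ acts as a polynomial in the Heisenberg creation operators $\sigma_{-n}$ ($\sigma\in S$, $n\geq 1$) on $v\in W(R;\delta+\lambda)\subset V_{L^\circ}$. Since each $\sigma_{-n}$ is a mode of $\sigma_{-1}\otimes\mathbf{1}\in W(R,S)$, those operators preserve every $W(R,S)$-submodule of $V_{L^\circ}$. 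I then verify that $\Phi$ is a bijection of bi-graded vector spaces respecting both the $\Sy{\ha(S)}$- and the $W(R)$-action.

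That $\Phi$ lands in $W(R,S;\lambda)$ reduces to showing $e^{\delta+\lambda}\in W(R,S;\lambda)$ for every $\delta\in L(S)$, for then $W(R;\delta+\lambda)=W(R)\cdot e^{\delta+\lambda}\subset W(R,S;\lambda)$. Writing $\delta=\sum_j k_j\sigma_j$, I would prove this by induction on $\sum_j|k_j|$: iterated modes of $Y(e^{\pm\sigma_j},z)$ applied to $e^\lambda$ yield $e^{\delta+\lambda}$ up to a polynomial factor in Heisenberg generators from $S$, which can be stripped off using the $\sigma_{-n}$ operators already available in $W(R,S)$. Injectivity of $\Phi$ is combinatorial: since $R$ and $S$ are disjoint in the basis $B$, $\mathfrak{h}(R)\cap\mathfrak{h}(S)=0$, so the symmetric-algebra multiplication $S(\widehat{\mathfrak{h}}(S)_-)\otimes S(\widehat{\mathfrak{h}}(R)_-)\hookrightarrow M(1)$ is an injection. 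Because $W(R)$ is generated by the operators $Y(e^\rho,z)$ for $\rho\in R$, whose expansions only involve Heisenberg creation modes $\rho_{-n}$ with $\rho\in\mathfrak{h}(R)$, one has $W(R;\delta+\lambda)\subset S(\widehat{\mathfrak{h}}(R)_-)\otimes\mathbb{C}[L_+(R)+\delta+\lambda]$, and distinct $\delta\in L(S)$ sit in distinct charge sectors of $V_{L^\circ}$. Hence $\Phi$ is injective on each bi-graded component.

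The main obstacle is surjectivity, which I would tackle by a PBW-style spanning argument supplemented by the Milas--Penn combinatorial bases \cite{MP} for the principal subspaces $W(R;\delta+\lambda)$. Every element of $W(R,S;\lambda)=W(R,S)\cdot e^\lambda$ is a sum of monomials $X_1\cdots X_k\cdot e^\lambda$ in which each $X_i$ is a mode of one of the generators $e^\rho$, $e^{\pm\sigma}$, or $\sigma_{-1}\otimes\mathbf{1}$. Using the commutators
\[
[\sigma_{-m},(e^\rho)_n]=\langle\sigma,\rho\rangle(e^\rho)_{n-m},\qquad [\sigma_{-m},(e^{\pm\sigma'})_n]=\pm\langle\sigma,\sigma'\rangle(e^{\pm\sigma'})_{n-m}
\]
for $\sigma,\sigma'\in S$, $\rho\in R$, $m\geq 1$, I would pull every $\sigma_{-m}$-factor to the left, accumulating a Heisenberg prefactor from $\Sy{\ha(S)}$ and leaving a word built from $e^\rho$- and $e^{\pm\sigma}$-modes only. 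The residual word shifts $e^\lambda$ to some $e^{\delta+\lambda}$ via the $e^{\pm\sigma}$-modes and then acts on it by $W(R)$, landing in $W(R;\delta+\lambda)$. Organizing the induction by bi-degree (charge together with conformal weight) and comparing the resulting spanning set against the tensor product of the Milas--Penn basis with the monomial basis of $\Sy{\ha(S)}$ — which is linearly independent by the preceding injectivity — gives surjectivity. Since $\Phi$ intertwines the $\Sy{\ha(S)}$-action by construction and the $W(R)$-action summand-wise, the bijection is the asserted isomorphism of $\Sy{\ha(S)}\otimes W(R)$-modules.
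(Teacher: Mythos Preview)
Your overall architecture --- define the multiplication map $\Phi$, verify the image lies in $W(R,S;\lambda)$, prove injectivity from the disjointness of $R$ and $S$ in the basis, and then argue surjectivity by straightening monomials --- is exactly the shape of the paper's argument. The injectivity paragraph is fine, and the reduction to showing $e^{\delta+\lambda}\in W(R,S;\lambda)$ is also correct.

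The gap is in your surjectivity step. After you have pulled the Heisenberg creation operators $\sigma_{-m}$ to the left, you are left with a word in the modes $(e^{\rho})_m$ and $(e^{\pm\sigma})_n$, and you assert that this ``shifts $e^\lambda$ to some $e^{\delta+\lambda}$ via the $e^{\pm\sigma}$-modes and then acts on it by $W(R)$''. But the modes $(e^{\rho})_m$ and $(e^{\pm\sigma})_n$ do not commute in any way that permits such a reordering: when $\langle\rho,\sigma\rangle\neq 0$ (as happens for adjacent simple roots), the commutator involves modes of $e^{\rho\pm\sigma}$, and the annihilation part $E^+(-\sigma,x)$ acts nontrivially on the $\rho_{-k}$'s already present. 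So an arbitrary word in these modes, applied to $e^\lambda$, produces an element of $\Sy{\ha(R\sqcup S)}\otimes e^{\gamma+\delta+\lambda}$, but there is no a priori reason that its $\Sy{\ha(R)}$-part lies in the proper subspace $W(R;\delta+\lambda)^\gamma\subsetneq \Sy{\ha(R)}\otimes e^{\gamma+\delta+\lambda}$. Your final sentence about ``comparing the resulting spanning set against the Milas--Penn basis'' does not close this gap, since you have no independent dimension bound on $W(R,S;\lambda)$ to compare against.

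The paper resolves exactly this point by an induction on the number of factors $\tau_i\in S\sqcup(-S)$ in the word. The key lemma (the paper's Lemma~\ref{sec:cor1prop1}) says that for disjoint $R',R''\subset B$ one has
\[
(W(R'\sqcup R'';\lambda))^{\delta'+\delta''}\subset \Sy{\ha(R')}\cdot (W(R'';\delta'+\lambda))^{\delta''},
\]
and its proof is not a commutator calculation but rather uses the \emph{ordered} Milas--Penn basis of the left-hand side: in that basis the modes of the last basis vector $\beta_n$ are applied outermost, so one can peel them off and commute the remaining Heisenberg polynomial past them via \eqref{eqn:opee}. Applying this with $R'=\pm S$ and $R''=R$ to the innermost segment of the word absorbs one $e^{\pm\sigma}$-factor into the base point $e^{\tau_0}\mapsto e^{\tau_0\pm\sigma}$ at the cost of an $\Sy{\ha(S)}$-prefactor, and the induction proceeds. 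This is the missing idea in your sketch; once you insert it, your argument becomes the paper's Proposition~\ref{sec:thm1}.
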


We prove the theorem in \S 4.

Now, we construct a basis of $W(R,S;\lambda)$ using the theorem and the result of \cite{MP}.
Write $r=|R|$ and $s=|S|$.
Put $\rho_i=\beta_i$ for $1\leq i \leq r$,  and  $\sigma_j=\beta_{r+j}$ for $1\leq j \leq s$.
Let $k_1,\ldots,k_r$ be non-negative integers.
For $1\leq i \leq r$, consider the set
\begin{eqnarray*}
 M_i(R;\lambda;k_1,\ldots,k_r) &=& 
\{
\left(m_{k_i},\ldots,m_1\right) \in \mathbb{Z}^{k_i} | \\
&& \ \ \ \ \ \ m_1 \leq -1 - \sum_{l=1}^{i-1} k_l \langle \rho_i, \rho_l \rangle - \langle \rho_i, \lambda \rangle, \\
&& \ \ \ \ \ \ m_{j+1}\leq m_j - \langle \rho_i, \rho_i \rangle \ 
(1\leq j \leq k_i-1)
\}.
\end{eqnarray*}
For each sequence $\mu=(m_k,\ldots,m_1)$ of integers and $\beta\in B$, set $\varepsilon^\beta_\mu=(e^\beta)_{m_k}\ldots(e^\beta)_{m_1}$.
Consider the set
\begin{eqnarray*}
{\cal B}(R;\lambda;k_1,\ldots,k_r) &=& \{
\varepsilon^{\rho_r}_{\mu_r}\ldots \varepsilon^{\rho_1}_{\mu_1}.  e^{\lambda} | 
 \mu_i\in M_i(R;\lambda;k_1,\ldots,k_r) \ (1\leq i \leq r)
\}.
\end{eqnarray*}
Note that the elements of the set ${\cal B}(R;\lambda;k_1,\ldots,k_r)$ have the charge $k_1\rho_1+\cdots+k_r\rho_r$.

The following lemma is the result of \cite{MP}.

\begin{lem}(\cite{MP}, Corollary 4.8.) \label{sec:lemmp}
If $k_1,\ldots,k_r$ are non-negative integers,
then the set ${\cal B}(R;\lambda;k_1,\ldots,k_r)$ is a $\mathbb{C}$-basis of the vector space 
$(W(R;\lambda))^\rho$, where $\rho={k_1\beta_1+\cdots+k_r\beta_r}$.
\end{lem}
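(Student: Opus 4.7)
This lemma is quoted as Corollary 4.8 of \cite{MP}, so my plan is to sketch the Milas--Penn argument, which splits into a spanning step and a linear independence step.

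For spanning, I would start from the tautology that $(W(R;\lambda))^\rho$ is spanned by vectors of the form $(e^{\rho_{i_k}})_{m_k}\cdots(e^{\rho_{i_1}})_{m_1}.e^\lambda$ whose total charge equals $\rho$. The lattice-VOA locality identity
\[
(x-y)^{N_{ij}}Y(e^{\rho_i},x)Y(e^{\rho_j},y)=\pm(x-y)^{N_{ij}}Y(e^{\rho_j},y)Y(e^{\rho_i},x)\qquad (N_{ij}\ge \langle\rho_i,\rho_j\rangle)
\]
translates into mode relations which allow one to reorder so that all $e^{\rho_r}$-modes appear leftmost, then the $e^{\rho_{r-1}}$-modes, and so on, matching the shape of ${\cal B}(R;\lambda;k_1,\ldots,k_r)$. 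The analogous identity for $i=j$ forces the gap condition $m_{j+1}\le m_j-\langle\rho_i,\rho_i\rangle$ within each block. Finally, any mode $(e^{\rho_i})_m$ with $m$ above the initial threshold $-1-\sum_{l<i}k_l\langle\rho_i,\rho_l\rangle-\langle\rho_i,\lambda\rangle$ is commuted through to act on $e^\lambda$, where the vanishing of positive-power terms of $E^+(-\rho_i,x)e^\lambda$ kills it modulo terms with fewer modes or strictly larger offsets. An induction on total weight (and, within a given weight, on lexicographic order of the mode sequence) concludes the spanning step.

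For linear independence, I would follow the Capparelli--Lepowsky--Milas strategy adapted in \cite{MP}: restrict the lattice vertex operators to build an intertwining operator
\[
{\cal Y}(\cdot,z)\colon W(R;\mu)\otimes W(R;\lambda)\longrightarrow W(R;\mu+\lambda)\{z\},
\]
together with a shift isomorphism $W(R;\lambda)\to W(R;\lambda+\mu)$ that sends ${\cal B}(R;\lambda;k_1,\ldots,k_r)$ into a candidate basis at a different charge. A judicious choice of $\mu$, typically a negative multiple of one of the $\rho_i$, converts an alleged nontrivial relation among the elements of ${\cal B}(R;\lambda;k_1,\ldots,k_r)$ into a relation among vectors indexed by strictly smaller tuples $(k_1',\ldots,k_r')$, and an induction on $(k_1,\ldots,k_r)$ in a suitable well-ordering finishes the argument.

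The spanning step is essentially algorithmic once the locality relations are written out. The real obstacle is linear independence: one must verify that the intertwining/shift construction genuinely decreases the combinatorial data without destroying the leading terms, and that the bookkeeping matches up with the indexing sets $M_i(R;\lambda;k_1,\ldots,k_r)$ exactly. Since this is precisely the content of \cite{MP}, I would ultimately just invoke their Corollary 4.8.
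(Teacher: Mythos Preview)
Your sketch of the Milas--Penn argument is accurate as far as it goes, and you are right that the core of the lemma is \cite{MP}, Corollary~4.8. However, simply invoking that result does not finish the job, and the paper's own proof makes this explicit.

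The issue is the ambient lattice. In \cite{MP} the principal subalgebra is $W_{L(R)}(R)$ and the modules are $W_{L(R)+\lambda}(R)$ with $\lambda\in (L(R))^{\circ}\subset\mathfrak{h}(R)$; everything lives inside $V_{(L(R))^\circ}$. In the present paper, by contrast, $\lambda$ is taken in $L^{\circ}\subset\mathfrak{h}$, the dual of the \emph{full} lattice $L$, and $W(R;\lambda)$ sits inside $V_{L^\circ}$. So $\lambda$ may have components orthogonal to $\mathfrak{h}(R)$, and the statement being asserted is strictly more general than what Corollary~4.8 of \cite{MP} provides. Your concluding line ``this is precisely the content of \cite{MP}'' glosses over this distinction.

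The paper closes this gap with Lemma~\ref{sec:lem1}: for any $\lambda\in L^\circ$, the vector $(e^{\tau_1})_{i_1}\cdots(e^{\tau_l})_{i_l}e^{\lambda}$ equals a nonzero scalar times $g\otimes e^{\lambda+\tau_1+\cdots+\tau_l}$, where $g\in\Sy{\ha}$ is determined by the same operator string acting on $\mbox{\boldmath $1$}$ with indices shifted by the pairings $\langle\tau_j,\lambda+\tau_l+\cdots+\tau_{j+1}\rangle$. These shifts are exactly the $-\langle\rho_i,\lambda\rangle$ terms built into the definition of $M_i(R;\lambda;k_1,\ldots,k_r)$, so the map sending the $\lambda=0$ basis ${\cal B}(R;0;k_1,\ldots,k_r)$ to ${\cal B}(R;\lambda;k_1,\ldots,k_r)$ is a bijection that respects linear independence and spanning. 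This reduces the general $\lambda\in L^\circ$ case to $\lambda=0\in(L(R))^\circ$, which is covered by \cite{MP}.

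The fix is short, but it is a genuine step, and your proposal should include it rather than absorb it into the citation.
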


Note that for each $\tau\in L(R,S)$, the subspace $(W(R,S;\lambda))^\tau$ is a free $\Sy{\ha(S)}$-module.
By THEOREM \ref{sec:str} and LEMMA \ref{sec:lemmp}, we obtain the following corollary.

\begin{cor}\label{sec:cor1}
If $\delta$ is an element of $L(S)$, and $k_1,\ldots,k_r$ are non-negative integers, 
then the set ${\cal B}(R;\lambda+\delta;k_1,\ldots,k_r)$ is a basis of the free $\Sy{\ha(S)}$-module 
$(W(R,S;\lambda))^\tau$, where $\tau={k_1\rho_1+\cdots+k_r\rho_r+\delta}$.
\end{cor}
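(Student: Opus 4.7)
The plan is to deduce the corollary directly from \textbf{THEOREM \ref{sec:str}} and \textbf{LEMMA \ref{sec:lemmp}} by picking out a single charge-graded component of the isomorphism (\ref{eqn:streqn1}). Both sides of (\ref{eqn:streqn1}) are naturally charge-graded subspaces of $V=V_{L^\circ}$, and the isomorphism is compatible with this grading; once we isolate the $\tau$-graded piece, the lemma of Milas--Penn identifies a basis of the $W(R)$-factor, while the Heisenberg factor contributes $\Sy{\ha(S)}$ freely.

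First, I would track the actual charges on both sides of (\ref{eqn:streqn1}). On the left, $(W(R,S;\lambda))^\tau = W(R,S;\lambda)\cap V^{\tau+\lambda}$, so the elements sit in $V^{\tau+\lambda}$. On the right, the factor $\Sy{\ha(S)}$ carries zero $L^\circ$-charge (it consists of polynomials in Heisenberg modes), while the summand $W(R;\delta+\lambda)$ uses the shifted convention $W(R;\delta+\lambda)^{\tau'} = W(R;\delta+\lambda)\cap V^{\tau'+\delta+\lambda}$. Matching actual charges $\tau+\lambda = \tau'+\delta+\lambda$ forces $\tau' = \tau-\delta$. Hence, for $\tau = k_1\rho_1+\cdots+k_r\rho_r+\delta$, the only surviving summand on the right is the one indexed by the specified $\delta\in L(S)$, and the relevant piece inside it is $(W(R;\delta+\lambda))^{k_1\rho_1+\cdots+k_r\rho_r}$. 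That is, (\ref{eqn:streqn1}) restricts to the $\Sy{\ha(S)}$-module isomorphism
\[
(W(R,S;\lambda))^\tau \;\cong\; \Sy{\ha(S)}\otimes (W(R;\lambda+\delta))^{k_1\rho_1+\cdots+k_r\rho_r}.
\]

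Next, \textbf{LEMMA \ref{sec:lemmp}} applied with base point $\lambda+\delta$ asserts that ${\cal B}(R;\lambda+\delta;k_1,\ldots,k_r)$ is a $\mathbb{C}$-basis of $(W(R;\lambda+\delta))^{k_1\rho_1+\cdots+k_r\rho_r}$. Plugging this into the displayed isomorphism, $(W(R,S;\lambda))^\tau$ becomes a free $\Sy{\ha(S)}$-module with basis ${\cal B}(R;\lambda+\delta;k_1,\ldots,k_r)$, which is exactly the claim. In particular, freeness as an $\Sy{\ha(S)}$-module is automatic from the tensor structure.

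I do not expect any real obstacle here; the entire content of the corollary has been front-loaded into the structure theorem. The only subtle step is the bookkeeping in the first paragraph, which must verify that the isomorphism of \textbf{THEOREM \ref{sec:str}} respects the shifted charge grading used for intermediate modules. This is essentially built into the statement, since each summand $W(R;\delta+\lambda)$ on the right is defined so that its generator $e^{\delta+\lambda}$ has the same actual $L^\circ$-charge as the image of $1\otimes e^{\delta+\lambda}$ on the left; the Heisenberg factor $\Sy{\ha(S)}$ preserves charges, so the grading match propagates to all of $W(R,S;\lambda)$.
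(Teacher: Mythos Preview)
Your proposal is correct and follows essentially the same approach as the paper, which simply states that the corollary is obtained ``by THEOREM \ref{sec:str} and LEMMA \ref{sec:lemmp}''; you have merely made explicit the charge bookkeeping that the paper leaves implicit. The one point you could state more sharply is why only a single summand survives: for any $\delta'\in L(S)$, the piece $(W(R;\lambda+\delta'))^{\tau-\delta'}$ is nonzero only if $\tau-\delta'\in L_+(R)$, and since $R$ and $S$ are disjoint subsets of a $\mathbb{Z}$-basis the decomposition $\tau=(k_1\rho_1+\cdots+k_r\rho_r)+\delta$ with the first part in $L_+(R)$ and the second in $L(S)$ is unique, forcing $\delta'=\delta$.
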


Consider the set
\begin{eqnarray*}
{\cal B}(\Sy{\ha(S)}) &=&
\{ (\sigma_1)_{i_1^1}\ldots(\sigma_1)_{i_1^{l_1}}\ldots(\sigma_s)_{i_s^1}\ldots(\sigma_s)_{i_s^{l_s}} | \\
&& \ \ \ \ \ l_1,\ldots,l_s \geq 0, i_j^1 \leq \cdots \leq i_j^{l_j} \leq -1 \ (1\leq j \leq s)\}.
\end{eqnarray*}
Then ${\cal B}(\Sy{\ha(S)})$ is a basis of $\Sy{\ha(S)}$.
Consider the set
\[
{\cal B}(R,S;\lambda)={\cal B}(\Sy{\ha(S)})\otimes \coprod_{\delta\in L(S), k_1, \ldots, k_r\geq 0} {\cal B}(R;\lambda+\delta;k_1,\ldots,k_r).
\]
Here, for vector spaces $P$ and $Q$ and subsets $X\subset P$ and $Y\subset Q$, we  denote by $X\otimes Y$ the set
$
\{ x\otimes y \in P \otimes Q | x\in X, y\in Y\}.
$

Then we obtain the following corollary.

\begin{cor}\label{sec:cor11}
The following hold.
\begin{description}
\item[(i)]
If $\delta$ is an element of $L(S)$, and $k_1,\ldots,k_r$ are non-negative integers,
then the set ${\cal B}(\Sy{\ha(S)})\otimes {\cal B}(R;\lambda+\delta;k_1,\ldots,k_r)$
is a $\mathbb{C}$-basis of the vector space $W(R,S;\lambda)^\tau$, where $\tau={k_1\rho_1+\cdots+k_r\rho_r+\delta}$.
\item[(ii)]
The set ${\cal B}(R,S;\lambda)$ is a $\mathbb{C}$-basis of the vector space  $W(R,S;\lambda)$.
\end{description}
\end{cor}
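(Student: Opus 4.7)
The plan is to derive both parts of the corollary as formal consequences of the previously established results, namely Theorem \ref{sec:str} (the structure theorem, proved in \S 4) and Corollary \ref{sec:cor1} (which gives the free $\Sy{\ha(S)}$-module basis on each charge component). No new vertex-algebra machinery is needed; the work is essentially bookkeeping via the charge grading.

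For part (i), I would invoke Corollary \ref{sec:cor1}, which asserts that $\mathcal{B}(R;\lambda+\delta;k_1,\ldots,k_r)$ is a basis of the free $\Sy{\ha(S)}$-module $(W(R,S;\lambda))^\tau$ when $\tau=k_1\rho_1+\cdots+k_r\rho_r+\delta$. A standard elementary fact is that if $M$ is a free $A$-module with $A$-basis $\{b_j\}$ and $A$ is a $\mathbb{C}$-algebra with $\mathbb{C}$-basis $\{a_i\}$, then $\{a_i \cdot b_j\}$ is a $\mathbb{C}$-basis of $M$. Applying this with $A=\Sy{\ha(S)}$, whose PBW $\mathbb{C}$-basis is precisely $\mathcal{B}(\Sy{\ha(S)})$, immediately yields part (i).

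For part (ii), I would use the charge decomposition $W(R,S;\lambda)=\bigoplus_{\tau\in L(R,S)}(W(R,S;\lambda))^\tau$ noted in \S\ref{sec:setting}. Because $L(R,S)=L_+(R)\oplus L(S)$, every charge $\tau$ of $W(R,S;\lambda)$ admits a \emph{unique} decomposition $\tau=k_1\rho_1+\cdots+k_r\rho_r+\delta$ with $k_1,\ldots,k_r\in\mathbb{Z}_{\geq 0}$ and $\delta\in L(S)$. Taking the disjoint union over all such $(\delta,k_1,\ldots,k_r)$ of the bases from part (i) produces a $\mathbb{C}$-basis of the whole space, which is exactly $\mathcal{B}(R,S;\lambda)$ as defined.

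The only potential obstacle is verifying the uniqueness of the charge decomposition and confirming that the sets in part (i) across distinct $\tau$ are pairwise disjoint; both are immediate from $L(R,S)=L_+(R)\oplus L(S)$ and from the fact that elements of $\mathcal{B}(R;\lambda+\delta;k_1,\ldots,k_r)$ carry a prescribed charge $k_1\rho_1+\cdots+k_r\rho_r+\delta$ (as remarked right after the definition of $\mathcal{B}(R;\lambda;k_1,\ldots,k_r)$) while elements of $\mathcal{B}(\Sy{\ha(S)})$ have charge zero. Since these verifications are routine, the corollary follows with no substantive difficulty once Theorem \ref{sec:str} is in hand.
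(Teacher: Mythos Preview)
Your proposal is correct and matches the paper's intent: the paper states this corollary without proof, presenting it as an immediate consequence of Corollary \ref{sec:cor1} and the charge decomposition, which is exactly the bookkeeping you carry out. The only minor remark is that Theorem \ref{sec:str} is not separately needed for part (ii) once you have Corollary \ref{sec:cor1} (it was already used to derive Corollary \ref{sec:cor1}), but this does not affect the argument.
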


This is a generalization of the result of \cite{MP}.

\subsection{Graded dimensions of the intermediate vertex subalgebras and modules}

Recall that $V_{L^\circ}$ and our subspaces carry the bi-gradings: the charge and weight gradings.

\begin{dfn}
\begin{em}
The {\it graded dimension} $\chi _{W(R,S;\lambda) }$ of $W(R,S;\lambda)$ is
\[
\chi _{W(R,S;\lambda) } ({\bf x};q) = \sum_{\tau \in L^\circ, r\in \mathbb{Q}} \dim_{\mathbb{C}} \left( \left(W(R,S;\lambda) \right)^\tau_r \right) q^r x_1^{k_1} \cdots x_n^{k_n},
\]
where $k_1\beta_1 +\cdots +k_n \beta_n=\tau$.
\end{em}
\end{dfn}

To compute the graded dimension, consider the symbols
\[
(q)_k=(q; q)_k=(1-q)\cdots(1-q^k), \ \ \  (k\geq 1),
\]
$(q)_0=1$,
and
$(q)_{\infty}=\prod_{i=1}^\infty (1-q^i)$.
Here $
(a;q)_k=(1-a)(1-aq)\cdots (1-aq^{k-1})
$
is the $q$-Pochhammer symbol.
Recall that $1/(q)_k$ agrees with the generating function of the partitions into parts not greater than $k$,
 therefore agrees with the generating function of the partitions into at most $k$ parts.
Recall further that
\begin{equation*}
\frac{1}{(q)_{\infty}} = \frac{1}{\varphi(q)} = \sum_{k=0} ^{\infty} p(k) q^k.
\end{equation*}
Here, $\varphi(q)$ is the Euler function and $p(k)$ is the number of the un-restricted partitions of an integer $k$.
Consider the Gram matrix $A=(\langle \beta_i,\beta_j \rangle)_{i,j}$.
Put $r=|R|$ and $s=|S|$.
Since $\lambda\in L^\circ$, $\lambda$ has the form $l_1\beta_1+\cdots +l_n\beta_n$ with $l_1,\ldots, l_n \in \mathbb{Q}$.
Put ${\bf l}=(l_1,\ldots,l_n)$.

\begin{thm} \label{sec:char}
The graded dimension of $W(R,S;\lambda)$ is given by
\[
\chi _{W(R,S;\lambda) } ({\bf x};q) 
= \sum_{k_1,\ldots,k_r \geq 0, k_{r+1},\ldots,k_{r+s} \in \mathbb{Z}}
\frac{q^{ \frac{ ({\bf k}+{\bf l}) \cdot A \cdot ({\bf k}+{\bf l}) } {2} } } {  (q)_{k_1} \cdots (q)_{k_{r}}\cdot (q)_{\infty}^s } 
x_1^{k_1} \cdots x_{r+s}^{k_{r+s}},
\]
where ${\bf k}=(k_1,\ldots,k_{r+s},0,\ldots,0)$.
\end{thm}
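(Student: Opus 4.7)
The plan is to apply COROLLARY \ref{sec:cor11}(i) charge by charge and sum the resulting weight-generating functions. Fix a charge $\tau=k_1\rho_1+\cdots+k_r\rho_r+\delta$ with $k_1,\ldots,k_r\geq 0$ and $\delta=k_{r+1}\sigma_1+\cdots+k_{r+s}\sigma_s\in L(S)$, and let $\mathbf{k}=(k_1,\ldots,k_{r+s},0,\ldots,0)$. By COROLLARY \ref{sec:cor11}(i), the space $(W(R,S;\lambda))^\tau$ has $\mathbb{C}$-basis $\mathcal{B}(\Sy{\ha(S)})\otimes\mathcal{B}(R;\lambda+\delta;k_1,\ldots,k_r)$, so the contribution of this charge to $\chi_{W(R,S;\lambda)}$ is the product of the weight-generating functions of the two factors. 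Since the charge $\tau$ monomial will in the end be $x_1^{k_1}\cdots x_{r+s}^{k_{r+s}}$, the task reduces to identifying each factor as a $q$-series.

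For the bosonic factor, the set $\mathcal{B}(\Sy{\ha(S)})$ is the standard PBW basis of the Fock module in $s$ independent bosons $\sigma_1,\ldots,\sigma_s$, none of which shifts charge, and its weight generating function is therefore $1/(q)_\infty^s$. For the principal factor, an element $\varepsilon^{\rho_r}_{\mu_r}\cdots\varepsilon^{\rho_1}_{\mu_1}\cdot e^{\lambda+\delta}$, with $\mu_i=(m_{k_i}^{(i)},\ldots,m_1^{(i)})$, has weight
\[
\tfrac{1}{2}\langle\lambda+\delta,\lambda+\delta\rangle+\sum_{i=1}^r k_i\bigl(\tfrac{1}{2}\langle\rho_i,\rho_i\rangle-1\bigr)-\sum_{i=1}^r\sum_{j=1}^{k_i}m_j^{(i)},
\]
since the operator $(e^{\rho_i})_m$ shifts weight by $\tfrac{1}{2}\langle\rho_i,\rho_i\rangle-m-1$. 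The admissibility conditions defining $M_i(R;\lambda+\delta;k_1,\ldots,k_r)$ are exactly those that, upon the substitution
\[
n_j^{(i)}=-m_j^{(i)}-1-\sum_{l<i}k_l\langle\rho_i,\rho_l\rangle-\langle\rho_i,\lambda+\delta\rangle-(j-1)\langle\rho_i,\rho_i\rangle,
\]
become $0\leq n_1^{(i)}\leq\cdots\leq n_{k_i}^{(i)}$, a partition into at most $k_i$ parts. Summing over $\mu_i$ for each $i$ produces a fixed $q$-shift times $1/(q)_{k_i}$.

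Collecting the shifts, the charge-$\tau$ generating function becomes
\[
\frac{q^{\frac{1}{2}\langle\lambda+\delta,\lambda+\delta\rangle+\sum_i k_i\langle\rho_i,\lambda+\delta\rangle+\sum_{l<i}k_ik_l\langle\rho_i,\rho_l\rangle+\frac{1}{2}\sum_i k_i^2\langle\rho_i,\rho_i\rangle}}{(q)_{k_1}\cdots(q)_{k_r}(q)_\infty^s},
\]
and a direct expansion of $\tfrac{1}{2}\langle\lambda+\delta+\sum_{i=1}^r k_i\rho_i,\lambda+\delta+\sum_{i=1}^r k_i\rho_i\rangle$ identifies the exponent with $\tfrac{1}{2}(\mathbf{k}+\mathbf{l})\cdot A\cdot(\mathbf{k}+\mathbf{l})$. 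Summing over all $(k_1,\ldots,k_r)\in\mathbb{Z}_{\geq 0}^r$ and $(k_{r+1},\ldots,k_{r+s})\in\mathbb{Z}^s$ and attaching $x_1^{k_1}\cdots x_{r+s}^{k_{r+s}}$ yields the claimed formula. The only substantive step is the change of variables in the principal factor; the final identity of $q$-exponents is a routine bilinear expansion, but it is the place where the Gram matrix $A$ enters and therefore where one must be most careful about the bookkeeping.
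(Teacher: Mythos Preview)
Your proof is correct and follows exactly the approach of the paper: apply COROLLARY~\ref{sec:cor11} charge by charge and use the weight relation ${\rm wt}(e^{\tau+\lambda})=\tfrac{1}{2}(\mathbf{k}+\mathbf{l})\cdot A\cdot(\mathbf{k}+\mathbf{l})$. The paper's proof states only these two ingredients, whereas you have spelled out the substitution that converts the $M_i$ conditions into partitions with at most $k_i$ parts and verified the bilinear identification of the $q$-exponent; these are the details the paper leaves implicit.
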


\begin{proof}
The assertion follows from  COROLLARY \ref{sec:cor11}
and the relation
\[
{\rm wt}(e^{\tau+\lambda})=\frac{ \langle \tau+\lambda, \tau+\lambda \rangle} {2} = \frac{ ({\bf k}+{\bf l}) \cdot A \cdot ({\bf k}+{\bf l}) } {2},
\]
where $\tau=k_1\beta_1+\cdots +k_n\beta_n$.
\end{proof}

For later use, we set
$
\chi'_{W(R,S;\lambda)}({\bf x};q)=q^{-\langle \lambda,\lambda \rangle /2}\chi_{W(R,S;\lambda)}({\bf x};q).
$

\section{Applications (Filling in the ``hole" of the characters of the ``exceptional" series) }

As an application of Theorem \ref{sec:char}, we will study the intermediate vertex subalgebra $V_{E_{7+1/2}}$ between the lattice VOAs $V_{E_7}$ and $V_{E_8}$.

\subsection{A well-known example: the intermediate vertex subalgebra $V_{A_{1/2}}$}

First we consider the well-known result as an example of the intermediate vertex subalgebras.

Let $A_1$ be a root lattice of type $A_1$ with the $\mathbb{Z}$-bilinear form $\langle, \rangle$,
and let $A_1^\circ$ be the dual lattice.
Consider the lattice VOA  $V_{A_1}$ associated with $A_1$.
Let $\alpha$ be a simple root and $\omega$ the fundamental weight.
Set $\Delta_1=\{\alpha\}$.
 
Consider the intermediate vertex subalgebra $V_{A_{1/2}}=W(\Delta_1,\emptyset)$  and the intermediate module $V_{A_{1/2}+\omega}=W(\Delta_1,\emptyset;\omega)$.

Note that they agree with the principal subspaces of the basic representations of the affine Lie algebra $A_1^{(1)}$.
 The graded dimensions were described in \cite{FS,SF}:
\[
\chi_{V_{A_{1/2}}}(x;q)=\sum_{k=0}^\infty \frac{q^{k^2}} {(q)_k}x^k
\]
and
\[
\chi'_{V_{A_{1/2}+\omega} } (x;q) = \sum_{k=0}^\infty \frac{q^{k^2+k} } {(q)_k}x^k.
\]
Set
\[
c=\frac{2}{5} \ \ \ \ \mbox{and} \ \ \ \ 
h=\frac{1}{5},
\]
as in the row of $\mu=11/900$ in Table \ref{tb:table1}.
We define the characters of $V_{A_{1/2}}$ and $V_{A_{1/2}+\omega}$ to be
\[
Z(V_{A_{1/2}};\tau)= q^{-c/24} \chi_{V_{A_{1/2}}}(1;q)
\]
and
\[
Z(V_{A_{1/2}+\omega};\tau)=q^{h-c/24} \chi'_{V_{A_{1/2}+\omega}}(1;q),
\]
where $\tau \in \mathbb{H}$ and $q=e^{2\pi i \tau}$.

Then the vector space spanned by the characters $Z(V_{A_{1/2}};\tau)$ and 
$Z(V_{A_{1/2}+\omega};\tau)$ is invariant under modular transformations and is the space of the solutions of the modular differential equation (\ref{eq:diff1}) with $\mu=11/900$:
\[
\left( q\frac{d}{dq}\right)^2 f(\tau) + 2 E_2(\tau)\left(q\frac{d}{dq}\right)f(\tau)-\frac{11}{5}E_4(\tau)f(\tau)=0.
\]
In fact, we have
\begin{equation} \label{eq:rr1}
Z\left(V_{A_{1/2}};\tau\right)=q^{-1/60} \sum_{k=0}^\infty \frac{q^{k^2}} {(q)_k}
\end{equation}
and
\begin{equation}\label{eq:rr2}
Z\left(V_{A_{1/2}+\omega} ;\tau\right) = q^{11/60} \sum_{k=0}^\infty \frac{q^{k^2+k} } {(q)_k}.
\end{equation}
The RHS of (\ref{eq:rr2}) agrees with the character of the Virasoro minimal model $L(-22/5,0)$ at $c=-22/5$,
and the RHS of (\ref{eq:rr1}) agrees with that of the unique non-identity irreducible module $L(-22/5,-1/5)$.

Hence,
\[
Z\left(V_{A_{1/2}};\tau\right)=Z\left(L\left(-22/5,-1/5\right);\tau\right),
\]
and
\[
Z\left(V_{A_{1/2}+\omega} ;\tau\right)=Z\left(L\left(-22/5,0\right);\tau\right).
\]
Therefore the characters of the intermediate subalgebra $V_{A_{1/2}}$ and the module $V_{A_{1/2}+\omega}$ form a basis of the space of the solutions of a modular differential equation.
The differential equation can be standardly deduced using a singular vector of the Virasoro Verma module $V(-22/5,0)$.
It coincides with the above differential equation. (cf: \cite{M} Theorem 6.1.) Thus the assertion holds.

\begin{rem}
\begin{em}
 Functions (\ref{eq:rr1}) and (\ref{eq:rr2}) are the Rogers-Ramanujan functions.
 These functions arise naturally  in solutions of the Regime I of Baxter's Hard Hexagon model \cite{Bax, A2}.
\end{em}
\end{rem}

For later use, put $\phi_1(\tau)=\eta(\tau)^{2/5} Z(V_{A_{1/2}})$ and $\phi_2(\tau)=\eta(\tau)^{2/5} Z(V_{A_{1/2}+\omega})$.
Here, $\eta(\tau)$ is the Dedekind eta function.
Note that the functions $\phi_1$ and $\phi_2$ are holomorphic modular forms of weight $1/5$ (with a suitable multiplier system) on the congruence subgroup $\Gamma(5)$ \cite{BKMS,I,Kan}.

\subsection{The intermediate vertex subalgebra $V_{E_{7+1/2}}$}

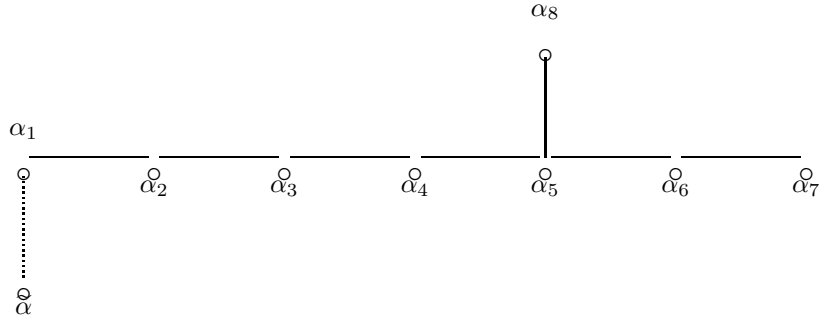
\begin{figure}[htbp]
\begin{center}
\def\maru{\lower7pt\hbox{\Large\hskip-0.0pt$\circ\hskip-0.0pt$}}
$$
\xymatrix@M-3.77pt@C40pt@R5pt{
&&&&\alpha_8\cr
&&&&\maru\ar@{-}[ddd]\cr
\vbox to 20pt{}\cr
\alpha_1\cr
\maru\ar@{-}[r]&\maru\ar@{-}[r]&\maru\ar@{-}[r]&\maru\ar@{-}[r]&\maru\ar@{-}[r]&\maru\ar@{-}[r]&\maru\cr
&\alpha_2&\alpha_3&\alpha_4&\alpha_5&\alpha_6&\alpha_7\cr
\vbox to 20pt{}\cr
\maru\ar@{.}[uuu]\cr
\widetilde{\alpha}\cr
\vbox to 20pt{}\cr
}
$$
\caption{Dynkin diagram of $E_8$}
\label{fig:fig1}
\end{center}
\end{figure}

Let $E_8$ be a root lattice of type $E_8$ with the $\mathbb{Z}$-bilinear form $\langle , \rangle$.
Let $\alpha_1,\ldots,\alpha_8$ be simple roots of $E_8$ and denote the highest root by $\tilde{\alpha}$, as illustrated in Figure \ref{fig:fig1}.
Note that a sublattice $E_7=\langle \alpha_2,\ldots,\alpha_8 \rangle \subset E_8$ is a root lattice of type $E_7$.
Set $\Delta_8=\{\alpha_1,\ldots,\alpha_8\}$.
Consider the lattice VOA $V_{E_8}$  associated with $E_8$ and the lattice subVOA $V_{E_7}\subset V_{E_8}$ associated with $E_7$.

Set $R=\{\alpha_1\}$ and $S=\{\alpha_2,\ldots,\alpha_8\}$.
Consider the intermediate vertex subalgebra $V_{E_{7+1/2}}=W(R,S)$ of $V_{E_8}$ associated with $(R,S)$.
That is,
\[
V_{E_{7+1/2}}= \langle e^{\alpha_1}, e^{\pm \alpha_2},\ldots,e^{\pm \alpha_8}, \alpha_2,\ldots,\alpha_8 \rangle_{{\rm v.a.}} \subset V_{E_8}.
\]

As in the previous subsection, we will fill in the ``hole" of the ``exceptional series" using the intermediate vertex subalgebra $V_{E_{7+1/2}}$.
For the purpose, we need to take a ``non-identity" intermediate module with the character satisfying the modular differential equation.
If we consider the ways of the theories of the intermediate Lie algebras and modules, we should take it in the ``non-identity irreducible module" of $V_{E_8}$.
But such a module does not exist since $E_8$ is a unimodular lattice, that is, $E_8=E_8^\circ$.
Therefore we try to take it in the identity module $V_{E_8}$. It will go well.

Consider the element $\alpha_1\in E_8$ and the intermediate module $W(R,S;\alpha_1)$.
We set $V_{E_{7+1/2}+\alpha_1}=W(R,S;\alpha_1)$.

Set
\[
c=\frac{38}{5}
\ \ \ \ \mbox{and} \ \ \ \ h=\frac{4}{5},
\]
as in the row of $\mu=551/900$ in Table \ref{tb:table1}.

We define the characters of $V_{E_{7+1/2}}$ and $V_{E_{7+1/2}+\alpha_1}$ to be
\[
Z(V_{E_{7+1/2}};\tau)= q^{-c/24} \chi_{V_{E_{7+1/2}}}(1,\ldots,1;q)
\]
and
\[
Z(V_{E_{7+1/2}+\alpha_1};\tau)=q^{h-c/24} \chi'_{V_{E_{7+1/2}+\alpha_1}}(1,\ldots,1;q),
\]
where $\tau \in \mathbb{H}$ and $q=e^{2\pi i \tau}$.

Then the vector space spanned by the characters $Z(V_{E_{7+1/2}};\tau)$ and \\ $Z(V_{E_{7+1/2}+\alpha_1};\tau)$ is invariant under modular transformations and is the space of the solutions of the modular differential equation (\ref{eq:diff1}) with $\mu=551/900$:
\begin{equation} \label{eq:diff712}
\left( q\frac{d}{dq}\right)^2 f(\tau) + 2 E_2(\tau)\left(q\frac{d}{dq}\right)f(\tau)-\frac{551}{5}E_4(\tau)f(\tau)=0.
\end{equation}

Let us prove the assertion.
Denote by $(\omega_2,\ldots,\omega_8)$  ($\omega_i \in E_7^\circ$ ($i=2,\ldots,8$))  the dual basis of the basis $(\alpha_2,\ldots,\alpha_8)$ of the sublattice $E_7$.
Consider the Virasoro minimal model $L(-3/5,0)$ at $c=-3/5$ and the modules $L(-3/5,h)$ with the conformal weights $h=0,3/4, 1/5$ and $-1/20$.

First, we show that the characters satisfy the following equalities:
\begin{eqnarray} \label{eq:zv712}
Z\left(V_{E_{7+1/2}};\tau\right)&=&Z\left(V_{E_7};\tau\right)\cdot Z\left(L\left(-3/5,-1/20\right);\tau\right) + \nonumber \\
 && Z\left(V_{E_7+\omega_2};\tau\right) \cdot Z\left(L\left(-3/5,1/5\right);\tau\right)
\end{eqnarray}
and
\begin{eqnarray} \label{eq:zl712}
Z\left(V_{E_{7+1/2}+\alpha_1};\tau\right)&=&Z\left(V_{E_7};\tau\right)\cdot Z\left(L\left(-3/5,3/4\right);\tau\right) + \nonumber \\
&& Z\left(V_{E_7+\omega_2};\tau\right) \cdot Z\left(L\left(-3/5,0\right);\tau\right).
\end{eqnarray}
By Theorem \ref{sec:char}, the characters are
\begin{equation} \label{eq:zv712pre}
Z\left(V_{E_{7+1/2}};\tau\right)=q^{-19/60} \cdot \sum_{ k_1\geq 0, k_2,\ldots,k_8 \in \mathbb{Z}} \frac{q^{ \frac{ {\bf k}\cdot M_8 \cdot {\bf k}^{\rm T} }{2} } }{(q)_{k_1} (q)_{\infty}^7 }
\end{equation}
and
\begin{equation} \label{eq:zl712pre}
Z\left(V_{E_{7+1/2}+\alpha_1};\tau\right)=q^{29/60}\cdot \sum_{ k_1\geq 1, k_2,\ldots,k_8 \in \mathbb{Z}} \frac{q^{ \frac{ {\bf k}\cdot M_8 \cdot {\bf k}^{\rm T} }{2} } }{(q)_{k_1-1} (q)_{\infty}^7},
\end{equation}
where ${\bf k}=(k_1,...,k_8)$.
Here $M_8$ is the Cartan matrix $M_8=(\langle \alpha_i, \alpha_j \rangle)_{(i,j=1,\ldots,8)}$ of $E_8$.
In (\ref{eq:zv712pre}), ${\bf k}=(k_1,\ldots,k_8)$ implies the charge $k_1\alpha_1+\cdots +k_8\alpha_8$ of $V$.
Note that the set of the charges of $V_{E_{7+1/2}}$ agrees with the set $L(R,S)=\{k_1\alpha_1+\cdots +k_8\alpha_8 | k_1\geq 0, k_2,\ldots,k_8 \in \mathbb{Z}\}$.

Recall (cf. \cite{K2}) that the highest root $\tilde{\alpha}$ of $E_8$ is
\[
\tilde{\alpha}= 2\alpha_1+3\alpha_2+4\alpha_3+5\alpha_4+6\alpha_5+4\alpha_6+2\alpha_7+3\alpha_8.
\]
Set
\[
L(R,S)^{\rm even}=\{k_2\alpha_2+\cdots +k_8\alpha_8+ k\tilde{\alpha} | k_2,\ldots,k_8 \in \mathbb{Z}, k\geq 0\}
\]
and
\[
L(R,S)^{\rm odd}=\{ k_2\alpha_2+\cdots+k_8\alpha_8+k\tilde{\alpha} +\alpha_1 | k_2,\ldots,k_8 \in \mathbb{Z}, k\geq 0\}.
\]
Then $L(R,S)=L(R,S)^{\rm even} \sqcup L(R,S)^{\rm odd}$.

Let $k_2,\ldots,k_8$ be integers and $k$ a non-negative integer.
Set $\mu=k_2\alpha_2+\cdots+k_8\alpha_8+k\tilde{\alpha}$ and $\nu=k_2\alpha_2+\cdots+k_8\alpha_8+k\tilde{\alpha}+\alpha_1$.
Then $\mu\in L(R,S)^{\rm even}$, and $\nu\in L(R,S)^{\rm odd}$.
Set $\beta=k_2\alpha_2+\cdots+k_8\alpha_8$ and ${\bf k}'=(k_2,\ldots,k_8)$.
Then $\beta$ belongs to $L(S)=E_7$.
Considering the extended Dynkin diagram of $E_8$ (Figure \ref{fig:fig1}), we obtain
\begin{eqnarray*}
\langle \beta,\beta\rangle &=& {\bf k}' \cdot M_7 \cdot {\bf k}'^{{\rm T}}, \\
\langle \beta, \tilde{\alpha} \rangle &=&0, \\
\langle \beta, \alpha_1 \rangle &=& \langle k_2\alpha_2, \alpha_1 \rangle =-k_2
\end{eqnarray*}
and
\begin{equation*}
\langle \tilde{\alpha}, \alpha_1 \rangle = \langle 3\alpha_2+2\alpha_1, \alpha_1 \rangle = 1.
\end{equation*}
Here $M_7$ is the Cartan matrix $(\langle \alpha_i, \alpha_j \rangle)_{(i,j=2,\ldots,8)}$ of $E_7$.
Then we have
\begin{eqnarray*}
\frac{\langle \mu, \mu \rangle}{2} 
= \frac{ {\bf k}' \cdot M_7 \cdot {\bf k}'^{{\rm T}} }{2} + k^2
\end{eqnarray*}
and
\begin{eqnarray*}
\frac{\langle \nu, \nu \rangle}{2} 
= \frac{ {\bf k}' \cdot M_7 \cdot {\bf k}'^{{\rm T}} }{2} - k_2+ k^2+ k + 1.
\end{eqnarray*}
Therefore we have
\begin{eqnarray}
&& Z\left(V_{E_{7+1/2}};\tau\right) \nonumber \\ &=&
 q^{-19/60} \sum_{k\geq 0, k_2,\ldots, k_8 \in \mathbb{Z}} \frac {q^{\frac{ {\bf k}' \cdot M_7 \cdot {\bf k}'^{{\rm T}}}{2} + k^2}} {(q)^7_\infty (q)_{2k}} + \nonumber \\
&&q^{-19/60} \sum_{k\geq 0, k_2,\ldots, k_8 \in \mathbb{Z}} \frac {q^{\frac{ {\bf k}' \cdot M_7 \cdot {\bf k}'^{{\rm T}}} {2} - k_2+k^2+k+1}} {(q)^7_\infty (q)_{2k+1}} \nonumber \\
&=& \left( q^{-7/24}\sum_{k_2,\ldots, k_8\in \mathbb{Z}} \frac {q^{\frac{ {\bf k}' \cdot M_7 \cdot {\bf k}'^{{\rm T}}}{2}}} {(q)_\infty^7} \right)
 \cdot \left( q^{-1/40} \sum_{k\geq 0} \frac {q^{k^2}} {(q)_{2k}} \right) + \nonumber \\
 && \left( q^{11/24} \sum_{k_2,\ldots, k_8 \in \mathbb{Z}} \frac {q^{\frac{ {\bf k}' \cdot M_7 \cdot {\bf k}'^{{\rm T}}}{2} -\langle \beta,\omega_2\rangle }} {(q)_\infty^7} \right)
 \cdot \left( q^{-9/40} \sum_{k\geq 0} \frac {q^{k^2+k}} {(q)_{2k+1}} \right), \nonumber \\
\label{eq:bhh1}
\end{eqnarray}
where ${\bf k}'= (k_2,\ldots, k_8)$, and $\beta=k_2\alpha_2+\cdots +k_8\alpha_8$.
Similarly, we have
\begin{eqnarray}
&& Z\left(V_{E_{7+1/2}+\alpha_1};\tau\right) \nonumber \\ 
&=& \left( q^{-7/24}\sum_{k_2,\ldots,k_8 \in \mathbb{Z}} \frac {q^{\frac{ {\bf k}' \cdot M_7 \cdot {\bf k}'^{{\rm T}}}{2}}} {(q)_\infty^7} \right)
 \cdot \left( q^{-9/40} \sum_{k\geq 1} \frac {q^{k^2}} {(q)_{2k-1}} \right) + \nonumber \\
 && \left( q^{11/24} \sum_{k_2,\ldots, k_8 \in \mathbb{Z}} \frac {q^{\frac{ {\bf k}' \cdot M_7 \cdot {\bf k}'^{{\rm T}}}{2} -\langle \beta,\omega_2\rangle }} {(q)_\infty^7} \right)
 \cdot \left( q^{1/40} \sum_{k\geq 0} \frac {q^{k^2+k}} {(q)_{2k}} \right), \nonumber \\
\label{eq:bhh2}
\end{eqnarray}
where ${\bf k}'=(k_2,\ldots, k_8)$, and $\beta=k_2\alpha_2+\cdots +k_8\alpha_8$.

It is known \cite{KKMM} that the four functions appearing in the above equalities agree with the characters of the Virasoro minimal model at $c=-3/5$:
\begin{eqnarray*}
q^{-1/40}\sum_{k\geq 0} \frac {q^{k^2}} {(q)_{2k}} &=& Z\left(L\left(-3/5,-1/20\right);\tau\right), \\
q^{9/40}\sum_{k\geq 0} \frac {q^{k^2+k}} {(q)_{2k+1}} &=& Z\left(L\left(-3/5,1/5\right);\tau\right), \\
q^{-9/40}\sum_{k\geq 1} \frac {q^{k^2}} {(q)_{2k-1}} &=& Z\left(L\left(-3/5,3/4\right);\tau\right)
\end{eqnarray*}
and
\begin{eqnarray*}
q^{1/40}\sum_{k\geq 0} \frac {q^{k^2+k}} {(q)_{2k}} &=& Z\left(L\left(-3/5,0\right);\tau\right).
\end{eqnarray*}
Furthermore, by the definition of the characters of the lattice VOAs, we have
\begin{eqnarray*}
q^{-7/24}\sum_{k_2,\ldots,k_8 \in \mathbb{Z}} \frac {q^{\frac{ {\bf k}' \cdot M_7 \cdot {\bf k}'^{{\rm T}}}{2}}} {(q)_\infty^7} = Z\left(V_{E_7};\tau\right)
\end{eqnarray*}
and
\begin{eqnarray*}
q^{11/24} \sum_{k_2,\ldots, k_8 \in \mathbb{Z}} \frac {q^{\frac{ {\bf k}' \cdot M_7 \cdot {\bf k}'^{{\rm T}}}{2} -\langle \beta,\omega_2\rangle }} {(q)_\infty^7} = Z\left(V_{E_7-\omega_2};\tau\right),
\end{eqnarray*}
where ${\bf k}'=(k_2,\ldots, k_8)$ and $\beta=k_2\alpha_2+\cdots+ k_8\alpha_8$.
Since
\[
-\omega_2=-\frac{1}{2}(3\alpha_2+4\alpha_3+5\alpha_4+6\alpha_5+4\alpha_6+2\alpha_7+3\alpha_8)
\]
 is a representative of the unique non-zero element of $E_7^\circ/E_7\cong \mathbb{Z}/2\mathbb{Z}$,
the module $V_{E_7-\omega_2}$ (also, $V_{E_7+\omega_2}$) is the unique non-identity irreducible module of the lattice VOA $V_{E_7}$ up to equivalence.
Thus we have (\ref{eq:zv712}) and (\ref{eq:zl712}).

\begin{rem}
\begin{em}
The four functions
\[
\sum_{k\geq 0} \frac {q^{k^2}} {(q)_{2k}}, \ \ \ \sum_{k\geq 0} \frac {q^{k^2+k}} {(q)_{2k+1}}, \ \ \ \sum_{k\geq 1} \frac {q^{k^2}} {(q)_{2k-1}} \ \ \mbox{and} \ \
\sum_{k\geq 0} \frac {q^{k^2+k}} {(q)_{2k}}
\]
appearing in equalities (\ref{eq:bhh1}) and (\ref{eq:bhh2}) arise naturally in solutions of the Regime IV of Baxter's Hard Hexagon model \cite{Bax, A2}.
\end{em}
\end{rem}

By the theories of VOAs (or theories of infinite dimensional Lie algebras \cite{K2,W},) we obtain the rules of modular transformations for the characters of the VOAs $V_{E_7}$ and $L(-3/5,0)$:

\begin{eqnarray}
\begin{pmatrix}
Z\left(V_{E_7}; -1/\tau\right) \\
Z\left(V_{E_7+\omega_2};-1/\tau\right)
\end{pmatrix}
=\frac{1}{\sqrt{2}}
\begin{bmatrix}
1 & 1 \\
1 & -1
\end{bmatrix}
\begin{pmatrix}
Z\left(V_{E_7};\tau\right) \\
Z\left(V_{E_7+\omega_2};\tau\right)
\end{pmatrix},\label{eqn:mod1}
\end{eqnarray}
\begin{eqnarray}
\begin{pmatrix}
Z\left(V_{E_7};\tau+1\right) \\
Z\left(V_{E_7+\omega_2}; \tau + 1 \right)
\end{pmatrix}
=
\begin{bmatrix}
e^{2\pi i(-7/24)} & 0 \\
0& e^{2\pi i(11/24)}
\end{bmatrix}
\begin{pmatrix}
Z\left(V_{E_7}; \tau\right) \\
Z\left(V_{E_7+\omega_2}; \tau\right)
\end{pmatrix},\label{eqn:mod2}
\end{eqnarray}

\begin{eqnarray}
&&
\begin{pmatrix}
Z(L(-3/5,0);-1/\tau) \\
Z(L(-3/5,3/4);-1/\tau) \\
Z(L(-3/5,1/5);-1/\tau) \\
Z(L(-3/5,-1/20);-1/\tau)
\end{pmatrix}
\nonumber \\
&=&
\sqrt{\frac{2}{5}}
\begin{bmatrix}
\sin(2\pi/5) & -\sin(2\pi/5) & -\sin(\pi/5)& \sin(\pi/5) \\
-\sin(2\pi/5) & -\sin(2\pi/5) & \sin(\pi/5 & \sin(\pi/5) \\
-\sin(\pi/5)& \sin(\pi/5)& \sin(2\pi/5)& \sin(2\pi/5) \\
\sin(\pi/5)& \sin(\pi/5)& \sin(2\pi/5)& \sin(2\pi/5)
\end{bmatrix}
 \cdot
\nonumber \\
&& \ \ \ \ \ \ \ \ \ \ \ \ \ \ \ \ \ \ \ \ \ \ \ \ \ \ \ \ \ \
\begin{pmatrix}
Z(L(-3/5,0);\tau) \\
Z(L(-3/5,3/4);\tau) \\
Z(L(-3/5,1/5);\tau) \\
Z(L(-3/5,-1/20);\tau)
\end{pmatrix}
\label{eqn:mod3}
\end{eqnarray}
and
\begin{eqnarray}
&&
\begin{pmatrix}
Z(L(-3/5,0);\tau+1) \\
Z(L(-3/5,3/4);\tau+1) \\
Z(L(-3/5,1/5);\tau+1) \\
Z(L(-3/5,-1/20);\tau+1)
\end{pmatrix}
\nonumber \\
&=&
\begin{bmatrix}
e^{2\pi i(1/40)} &  0&0&0 \\
0& e^{2\pi i(31/40)} & 0 & 0 \\
0 & 0 & e^{2\pi i(9/40)} & 0 \\
0& 0&0& e^{2\pi i(-1/40)}
\end{bmatrix}
\cdot
\nonumber \\
&& \ \ \  \ \ \ \ \ \ \ \ \ \ \ \ \  \ \ \ \ \ \ \ \ \ \ \ \ \ \ \ \ \ \ \ \ \ \ \ \ \ \ \ \ \ 
\begin{pmatrix}
Z(L(-3/5,0);\tau) \\
Z(L(-3/5,3/4);\tau) \\
Z(L(-3/5,1/5);\tau) \\
Z(L(-3/5,-1/20);\tau)
\end{pmatrix}
.\label{eqn:mod4}
\end{eqnarray}

By (\ref{eq:zv712}),(\ref{eq:zl712}) and  (\ref{eqn:mod1}) -- (\ref{eqn:mod4}), 
we have
\begin{eqnarray*}
&&
\begin{pmatrix}
Z\left(V_{E_{7+1/2}};-1/\tau\right) \\
Z\left(V_{E_{7+1/2}+\alpha_1};-1/\tau\right)
\end{pmatrix}
\nonumber \\
&=&
\frac{2}{\sqrt{5}}
\begin{bmatrix}
\sin(2\pi/5) & \sin(\pi/5) \\
\sin(\pi/5) & -\sin(2\pi/5)
\end{bmatrix}
\cdot
\begin{pmatrix}
Z\left(V_{E_{7+1/2}};\tau\right) \\
Z\left(V_{E_{7+1/2}+\alpha_1};\tau\right)
\end{pmatrix}
\end{eqnarray*}
and
\begin{eqnarray*}
&&
\begin{pmatrix}
Z\left(V_{E_{7+1/2}};\tau+1\right) \\
Z\left(V_{E_{7+1/2}+\alpha_1}; \tau + 1 \right)
\end{pmatrix}
\nonumber \\
&=&
\begin{bmatrix}
e^{2\pi i(-19/60)} & 0 \\
0& e^{2\pi i(29/60)}
\end{bmatrix}
\cdot
\begin{pmatrix}
Z\left(V_{E_{7+1/2}};\tau\right) \\
Z\left(V_{E_{7+1/2}+\alpha_1};\tau\right)
\end{pmatrix}.
\end{eqnarray*}
Hence, especially, the vector space spanned by the characters of the intermediate vertex subalgebra $V_{E_{7+1/2}}$ is invariant under modular transformations.

Furthermore, the rules coincides with that of modular transformations of a system of the solutions of the modular differential equations (\ref{eq:diff1}) with $\mu=551/900$  \cite{MMS2}:
\[
f_1(\tau)=\left( \frac{1}{16}\lambda(1-\lambda)\right)^{(1-x)/6} F\left(\frac{1}{2}-\frac{1}{6}x,\frac{1}{2}-\frac{1}{2}x,1-\frac{1}{3}x,\lambda\right)
\]
and
\[
f_2(\tau)=N\cdot \left( \frac{1}{16}\lambda(1-\lambda)\right)^{(1+x)/6} F\left(\frac{1}{2}+\frac{1}{6}x,\frac{1}{2}+\frac{1}{2}x,1+\frac{1}{3}x,\lambda\right),
\]
where $N$ is a normalization constant, 
$
x=\sqrt{1+36\mu}=1+\frac{1}{2}c
$
and
$
\lambda=\vartheta^4_2(\tau)/\vartheta^4_3(\tau).
$
Here, $\vartheta_i$ $(i=1,\ldots,4)$ are the Jacobi theta functions and $F$ are the hypergeometric functions.
If we multiply the solutions [resp., the characters] by $\eta(\tau)^{38/5}$, we obtain holomorphic modular forms
\[
g_1=\eta(\tau)^{38/5}f_1(\tau) \ \ \  \mbox{and} \ \ \ g_2=\eta(\tau)^{38/5}f_2(\tau)
\]
\[
 [\mbox{resp.,} \ \ \psi_1=\eta(\tau)^{38/5}Z\left(V_{E_{7+1/2}};\tau\right) \ \ \ \mbox{and} \ \ \ \psi_2=\eta(\tau)^{38/5}Z\left(V_{E_{7+1/2}};\tau\right) ]
\]
of weight $19/5$  on $\Gamma(5)$ (with a multiplier system that is $19$ times of that of $\phi_1=\eta(\tau)^{2/5}Z\left(V_{A_{1/2}}\right)$ and $\phi_2=\eta(\tau)^{2/5}Z\left(V_{A_{1/2}+\omega}\right))$.
It is known that the ring of holomorphic modular forms of weight $\frac{1}{5}\mathbb{Z}$ on $\Gamma(5)$ with this multiplier system is the polynomial ring
$\mathbb{C}[\phi_1,\phi_2]$ \cite{BKMS,I,Kan}.
Therefore, $g_1,g_2, \psi_1$ and $\psi_2$ are homogeneous polynomials in $\phi_1$ and $\phi_2$ of degree $19$,
hence $f_1,f_2,Z\left(V_{E_{7+1/2}}\right)$ and $Z\left(V_{E_{7+1/2}+\alpha_1}\right)$ are homogeneous polynomials in $p_1=Z\left(V_{A_{1/2}}\right)$ and $p_2=Z\left(V_{A_{1/2}+\omega}\right)$ of degree $19$.

Then explicitly computing the fist few terms of the $q$-expansions of the homogeneous polynomials $p_1^ip_2^{19-i}$ ($i=0,\ldots,19$), the solutions, and the characters, we obtain
$
f_1(\tau)=Z\left(V_{E_{7+1/2}}\right)$ and $f_2(\tau)=Z\left(V_{E_{7+1/2}+\alpha_1}\right).
$

We show $f_1(\tau)=Z\left(V_{E_{7+1/2}}\right)$.
We have
\[
f_1(\tau)=q^{-19/60}(1+190q+2831q^2+22306q^3+O(q^4))
\]
and
\[
Z\left(V_{E_{7+1/2}};\tau\right)=q^{-19/60}(1+190q+2831q^2+22306q^3+O(q^4)).
\]
Since both the functions $f_1$ and $Z(V_{E_{7+1/2}})$ belong to the ring $q^{-19/60}\mathbb{C}[[q]]$, the functions $f_1$ and $Z(V_{E_{7+1/2}})$ must be linear combinations of the polynomials
$
p_1^{19}$, $p_1^{14}p_2^5$, $p_1^{9}p_2^{10}$ and $ p_1^4p_2^{15}.
$
The first four terms of these polynomials are as follows:
\begin{eqnarray*}
p_1^{19} &=& q^{-19/60}(1+19q+190q^2+1330q^3+O(q^4)), \\
p_1^{14}p_2^5 &=&q^{-19/60}(0+q+14q^2+110q^3+O(q^4)), \\
p_1^9p_2^{10} &=& q^{-19/60}(0+0+q^2+9q^3+O(q^4))
\end{eqnarray*}
and
\[
p_1^4p_2^{15}=q^{-19/60}(0+0+0+q^3+O(q^4)).
\]
Therefore we obtain
\[
f_1(\tau)=p_1^{19}+171p_1^{14}p_2^5+247p_1^9p_2^{10}-57p_1^4p_2^{15}
\]
and
\begin{equation} \label{eq:kanv712}
Z\left(V_{E_{7+1/2}};\tau\right)=p_1^{19}+171p_1^{14}p_2^5+247p_1^9p_2^{10}-57p_1^4p_2^{15}.
\end{equation}
Hence we obtain
$
f_1(\tau)=Z\left(V_{E_{7+1/2}};\tau\right).
$
Similarly, we obtain
\begin{equation} \label{eq:kanl712}
f_2(\tau)=57p_1^{15}p_2^4+247p_1^{10}p_2^9-171p_1^5p_2^{14}+p_2^{19}=Z\left(V_{E_{7+1/2}+\alpha_1}\right).
\end{equation}

Thus we have the assertion that the characters of the intermediate vertex subalgebra $V_{E_{7+1/2}}$ form a basis of the space of the solutions of the modular differential equation (\ref{eq:diff712}).

(Note that the polynomials appearing in (\ref{eq:kanv712}) and (\ref{eq:kanl712}) were studied in \cite{Kan} as a system of the solutions of the differential equation \cite{Kan, Kan2,Kan3}
\begin{eqnarray*}
f''(\tau)-\frac{k+1}{6}E_2(\tau)f'(\tau) + \frac{k(k+1)}{12}E'_2(\tau)f(\tau)=0
\end{eqnarray*}
with $k=19/5$.
The above differential equation is equivalent to the modular differential equation (\ref{eq:diff1}) with $\mu=k(k+2)/36$.
More precisely, $f(\tau)$ satisfies (\ref{eq:diff1}) with $\mu=k(k+2)/36$ if and only if $\eta(\tau)^{2k}f(\tau)$ satisfies the above equation.
Hence we can deduce from (\ref{eq:zv712}) and (\ref{eq:zl712}) the new descriptions of the system of the solutions of the above differential equation with $k=19/5$ using \\
 1) the characters of the intermediate vertex subalgebra $V_{E_{7+1/2}}$ and module or \\
2) the characters of the lattice VOA associated with the $E_7$ root lattice and Virasoro minimal model at $c=-3/5$.)

\begin{rem}
\begin{em}
The lowest weight subspace of the module $V_{E_{7+1/2}+\alpha_1}$ is $57$-dimensional and contains the $56$-dimensional irreducible module of $E_{7}\subset E_{7+1/2}$.
The module $V_{E_{7+1/2}+\alpha_1}$ is expected to give modules of $E_{7+1/2}$ not having been studied.
\end{em}
\end{rem}

\section{Proof of THEOREM \ref{sec:str}}

In this section, we prove the main theorem.
Let us take over the setting and notations in \S 2.

\begin{lem}\label{sec:lem1}
Let $\lambda$ be an element of $L^\circ$. Let $\tau_1,\ldots,\tau_l$ be elements of $L$. Let $i_1,\ldots,i_l$ be integers.
Then, 
\begin{description}
\item[(i)] there exists a unique $g\in \Sy{\ha}$ such that
\begin{equation*}
(e^{\tau_1})_{(i_1+\langle \tau_1,\lambda+\tau_l+\cdots +\tau_2 \rangle )}\ldots(e^{\tau_l})_{(i_l+\langle \tau_l,\lambda \rangle)} \mbox{\boldmath $1$} =g\otimes e^{\tau_1+\cdots +\tau_l}. \label{eqn:lem21}
\end{equation*}
Furthermore,
\item[(ii)] there exists a non-zero constant $r\in \mathbb{C}$ such that
\begin{equation}
(e^{\tau_1})_{i_1}\ldots(e^{\tau_l})_{i_l} e^\lambda =r \cdot g\otimes e^{\lambda+\tau_1+\cdots +\tau_l}. \label{eqn:lem2}
\end{equation}
\end{description}
\end{lem}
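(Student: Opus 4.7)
The plan is to prove (i) and (ii) simultaneously by computing the formal vertex-operator product $Y(e^{\tau_1}, x_1) \cdots Y(e^{\tau_l}, x_l) v$ in closed form for both $v = \mathbf{1}$ and $v = e^\lambda$, then reading off the required products of modes as Laurent coefficients. The key computational tools are the factorization $Y(e^\alpha, x) = E^-(-\alpha, x) E^+(-\alpha, x) e_\alpha x^\alpha$ from the setup, and the exchange relation
\[
E^+(-\tau, x)\, E^-(-\sigma, y) = (1 - y/x)^{\langle \tau, \sigma \rangle}\, E^-(-\sigma, y)\, E^+(-\tau, x),
\]
which follows by a direct Baker--Campbell--Hausdorff computation from the Heisenberg bracket $[\tau(m), \sigma(n)] = m\, \delta_{m+n,0}\, \langle \tau, \sigma \rangle$.

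Applying the product from right to left to $\mathbf{1}$ and using $E^+(-\tau,x)\mathbf{1}=\mathbf{1}$, repeated use of the exchange relation moves every $E^+$ past all previously accumulated $E^-$'s until it annihilates the vacuum, while the $e_{\tau_j}$ and $x_j^{\tau_j}$ act on the running charge. A straightforward bookkeeping then gives
\[
Y(e^{\tau_1}, x_1) \cdots Y(e^{\tau_l}, x_l)\mathbf{1} \;=\; F(x_1,\ldots,x_l)\cdot\prod_{i<j} x_i^{\langle\tau_i,\tau_j\rangle}\cdot c_0\cdot\bigl(\mathbf{1}\otimes e^{\tau_1+\cdots+\tau_l}\bigr),
\]
where
\[
F(x_1,\ldots,x_l) \;=\; \prod_{j=1}^l E^-(-\tau_j, x_j)\mathbf{1}\cdot \prod_{i<j}(1-x_j/x_i)^{\langle\tau_i,\tau_j\rangle}
\]
has coefficients in $\Sy{\ha}$ and $c_0$ is a $\pm 1$-valued product of cocycle values $\epsilon(\cdot,\cdot)$. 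The identical calculation starting from $e^\lambda$ differs only by (a) an extra factor $\prod_j x_j^{\langle\tau_j,\lambda\rangle}$, coming from $x_j^{\tau_j}$ now acting on $e^\lambda$, (b) a different $\pm 1$-valued cocycle product $c_\lambda$, and (c) the replacement of $\mathbf{1}\otimes e^{\tau_1+\cdots+\tau_l}$ by $\mathbf{1}\otimes e^{\lambda+\tau_1+\cdots+\tau_l}$. Crucially, the series $F$ is unchanged.

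Extracting coefficients completes the proof. For (i), the mode index $i_j + \langle\tau_j,\lambda+\tau_{j+1}+\cdots+\tau_l\rangle$ picks out the coefficient of $x_j^{-i_j-1-\langle\tau_j,\lambda\rangle-\sum_{k>j}\langle\tau_j,\tau_k\rangle}$, and after cancelling the explicit factor $\prod_{i<j}x_i^{\langle\tau_i,\tau_j\rangle}$ this becomes the coefficient of $x_j^{-i_j-1-\langle\tau_j,\lambda\rangle}$ in $F$. Calling this unique element $g_0\in\Sy{\ha}$, (i) holds with $g=c_0\,g_0$, and uniqueness is immediate since $e^{\tau_1+\cdots+\tau_l}$ is a single basis vector of $\mathbb{C}[L]$. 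For (ii), the unshifted mode $i_j$ asks for the coefficient of $x_j^{-i_j-1}$, but the additional factor $\prod_j x_j^{\langle\tau_j,\lambda\rangle}$ again reduces this to the coefficient of $x_j^{-i_j-1-\langle\tau_j,\lambda\rangle}$ in $F$---the very same $g_0$. Hence $(e^{\tau_1})_{i_1}\cdots(e^{\tau_l})_{i_l}e^\lambda = c_\lambda\,g_0\otimes e^{\lambda+\tau_1+\cdots+\tau_l} = r\, g\otimes e^{\lambda+\tau_1+\cdots+\tau_l}$ with $r=c_\lambda/c_0\in\{\pm 1\}$, which is nonzero. The only real obstacle is the threefold bookkeeping of $x_j$-exponents (from the commutation factors $(1-x_j/x_i)^{\langle\tau_i,\tau_j\rangle}$, from $x^{\tau_j}$ acting on later $e^{\tau_k}$'s, and from $x^{\tau_j}$ acting on $e^\lambda$); the precise shift appearing in the mode indices of (i) is exactly what is needed to pre-compensate the latter two contributions, so that the same $g_0$ arises in both statements.
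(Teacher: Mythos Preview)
Your strategy is exactly what the paper's one--line proof has in mind: unwind the formula $Y(e^\alpha,x)=E^-E^+e_\alpha x^\alpha$ on the full product and observe that the only difference between acting on $\mathbf 1$ and on $e^\lambda$ is the monomial $\prod_j x_j^{\langle\tau_j,\lambda\rangle}$ and a nonzero cocycle factor. So the method is correct and coincides with the paper's.

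The execution, however, contains two bookkeeping slips in the coefficient extraction. First, in (i) you pass from the coefficient of $x_j^{-m_j-1}$ in $F\cdot\prod_{i<k}x_i^{\langle\tau_i,\tau_k\rangle}$ to a coefficient in $F$ by shifting the exponent by $+a_j$ (with $a_j=\sum_{k>j}\langle\tau_j,\tau_k\rangle$), whereas the correct shift is $-a_j$: if $G=x^aF$ then the coefficient of $x^N$ in $G$ is the coefficient of $x^{N-a}$ in $F$. So the true target exponent in $F$ is $-i_j-1-\langle\tau_j,\lambda\rangle-2a_j$, not $-i_j-1-\langle\tau_j,\lambda\rangle$. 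Second, in (ii) you correctly cancel the extra factor $\prod_j x_j^{\langle\tau_j,\lambda\rangle}$ but silently drop $\prod_{i<k}x_i^{\langle\tau_i,\tau_k\rangle}$, which is still present; with it, the target exponent in $F$ is $-i_j-1-\langle\tau_j,\lambda\rangle-a_j$.

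Once the arithmetic is fixed, the exponents you extract in (i) and (ii) differ by $a_j$, so the same $g_0$ does \emph{not} fall out for the index shift literally written in the statement. What actually matches (and what the paper's formula $r=\prod_j\epsilon(\tau_j,\lambda+\tau_{j+1}+\cdots+\tau_l)$ is computing) is the shift $m_j=i_j+\langle\tau_j,\lambda\rangle$ in (i). A cleaner way to see this, bypassing all monomial bookkeeping, is to note from the vertex operator formula that
\[
(e^\tau)_m(h\otimes e^\mu)=\epsilon(\tau,\mu)\,(B^{(\tau)}_{m+\langle\tau,\mu\rangle}h)\otimes e^{\tau+\mu}
\]
for operators $B^{(\tau)}_k$ on $\Sy{\ha}$ \emph{independent of $\mu$}; iterating from the right shows at once that the Heisenberg part of $(e^{\tau_1})_{i_1}\cdots(e^{\tau_l})_{i_l}e^\lambda$ depends on $\lambda$ only through the nonzero scalar $\prod_j\epsilon(\tau_j,\lambda+\tau_{j+1}+\cdots+\tau_l)$, with each index effectively shifted by $\langle\tau_j,\lambda\rangle$ relative to the vacuum case.
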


\begin{proof}
The assertions follow from the definition of vertex operators for the lattice VOA. Note that if one puts
\[
r=\epsilon(\tau_l,\lambda)\cdot \epsilon(\tau_{l-1},\lambda+\tau_l)\cdots \epsilon(\tau_1,\lambda+\tau_l+\cdots+\tau_2) \in \mathbb{C},
\]
$r$ is non-zero, and (\ref{eqn:lem2}) holds.
\end{proof}

\begin{proof}[Proof of LEMMA \ref{sec:lemmp}]
%The assertions follow from Corollary 4.8 in \cite{MP} and Lemma \ref{sec:lem1}.
If $\lambda\in (L(R))^\circ$, the assertion was proved in \cite{MP} (Corollary 4.8.)
The rest of the assertions follow from Lemma \ref{sec:lem1} and that of the case $\lambda=0\in (L(R))^\circ$.
\end{proof}

\begin{lem}\label{sec:cor1prop1}
Let $R'$ and $R''$ be disjoint subsets of $B$. 
Let $\delta'$ be an element of 
 $L_+(R')$
 and $\delta''$ an element of  $L_+(R'')$.
Then
\begin{equation}
(W(R'\sqcup R'';\lambda))^{\delta'+\delta''} \subset \Sy{\ha(R')}\cdot (W(R'';\delta'+\lambda))^{\delta''}. \label{eqn:corprop1eq1}
\end{equation}
\end{lem}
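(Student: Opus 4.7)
The plan is to apply LEMMA \ref{sec:lemmp} to the standard basis of $(W(R'\sqcup R'';\lambda))^{\delta'+\delta''}$ and to show that every basis vector lies in $\Sy{\ha(R')}\cdot(W(R'';\delta'+\lambda))^{\delta''}$. I order the basis so that $R'=\{\rho_1,\ldots,\rho_{r'}\}$ comes first and $R''=\{\rho_{r'+1},\ldots,\rho_{r'+r''}\}$ after. By LEMMA \ref{sec:lemmp}, the space $(W(R'\sqcup R'';\lambda))^{\delta'+\delta''}$ is spanned by vectors of the form
\[
b \;=\; \varepsilon^{\rho_{r'+r''}}_{\mu_{r'+r''}}\cdots\varepsilon^{\rho_{r'+1}}_{\mu_{r'+1}}\cdot\Bigl(\varepsilon^{\rho_{r'}}_{\mu_{r'}}\cdots\varepsilon^{\rho_1}_{\mu_1}\, e^\lambda\Bigr)
\]
with $\sum_{i\le r'}k_i\rho_i=\delta'$ and $\sum_{j\le r''}k_{r'+j}\rho_{r'+j}=\delta''$. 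Because the defining conditions on $\mu_i$ for $i\le r'$ involve only $\rho_1,\ldots,\rho_{i-1}\in R'$, the inner factor $w:=\varepsilon^{\rho_{r'}}_{\mu_{r'}}\cdots\varepsilon^{\rho_1}_{\mu_1}\, e^\lambda$ is a basis element of $(W(R';\lambda))^{\delta'}$.

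Applying LEMMA \ref{sec:lem1}(ii) to $w$, and noting that $Y(e^\rho,x)$ for $\rho\in L(R')$ only involves Heisenberg modes from $\mathfrak{h}(R')$, I may write $w=s\cdot e^{\lambda+\delta'}$ for some $s\in\Sy{\ha(R')}$. Thus
\[
b \;=\; \bigl(\varepsilon^{\rho_{r'+r''}}_{\mu_{r'+r''}}\cdots\varepsilon^{\rho_{r'+1}}_{\mu_{r'+1}}\bigr)\cdot s\cdot e^{\lambda+\delta'}.
\]
The next step is to commute $s$ leftwards past the product of $R''$-modes. Using the standard relation
\[
[\alpha(n),(e^\sigma)_m] \;=\; \langle\alpha,\sigma\rangle\,(e^\sigma)_{m+n},
\]
each commutation either moves an $\alpha(-n)$ through unchanged or produces a correction term in which one outer factor $(e^\sigma)_m$ is replaced by $\langle\alpha,\sigma\rangle (e^\sigma)_{m-n}$, still an $R''$-mode. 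Since the creation modes of $\mathfrak{h}(R')$ commute among themselves, this rearrangement terminates after finitely many steps, expressing
\[
b \;=\; \sum_j s_j\cdot X_j\cdot e^{\lambda+\delta'},
\]
where each $s_j\in\Sy{\ha(R')}$ and each $X_j$ is a product of Fourier modes $(e^\sigma)_{?}$ with $\sigma\in R''$ of total charge $\delta''$. Since $X_j\cdot e^{\lambda+\delta'}\in(W(R'';\lambda+\delta'))^{\delta''}$, we conclude $b\in\Sy{\ha(R')}\cdot(W(R'';\lambda+\delta'))^{\delta''}$, which yields the inclusion (\ref{eqn:corprop1eq1}).

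The main obstacle is the commutator bookkeeping: one must verify that each application of $[\alpha(n),(e^\sigma)_m]$ keeps the right-hand factor inside the subalgebra of $R''$-mode products (never introducing new $R'$-modes) and preserves the charge $\delta''$. This is immediate from the explicit formula above together with the finite polynomial degree of $s$, but needs to be written out carefully, perhaps by induction on the total number of mode operators in $s$ to the left of the $R''$-block.
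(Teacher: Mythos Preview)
Your argument is correct and follows essentially the same route as the paper's proof: order the basis so that $R'$ precedes $R''$, apply the Milas--Penn basis (LEMMA~\ref{sec:lemmp}), observe via LEMMA~\ref{sec:lem1} and the form of $Y(e^\rho,x)$ that the inner $R'$-block acts as an element of $\Sy{\ha(R')}$ on $e^{\lambda}$, and then commute that element leftwards through the $R''$-modes using $[h_k,(e^\sigma)_l]=\langle h,\sigma\rangle(e^\sigma)_{k+l}$. The paper writes out only the illustrative case $R''=\{\beta_n\}$, whereas you treat general $R''$ directly; otherwise the two proofs coincide.
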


\begin{proof}
Denote the RHS of (\ref{eqn:corprop1eq1}) by X.
We show the case
$R'=\{\beta_1,\ldots,\beta_{n-1}\}$ and $R''=\{\beta_n\}$.
Since $\delta'\in L_+(R')$, $\delta'$ has the form $k_1\beta_1+\cdots+k_{n-1}\beta_{n-1}$ with $k_1,\ldots,k_{n-1}\geq 0$.
Since $\delta''\in L_+(R'')$, $\delta''$ has the form $k_n\beta_n$ with $k_n \geq 0$.
Take $v\in {\cal B}(R'\sqcup R'';\lambda;k_1,\ldots,k_n)$.
By Lemma \ref{sec:lemmp}, it suffices to show $v\in X$.

The element $v$ has the form
$
\varepsilon^{\beta_n}_{\mu_n} \ldots  \varepsilon^{\beta_1}_{\mu_1} e^\lambda
$
with $\mu_i \in M_i(R'\sqcup R'';\lambda;k_1,\ldots,k_n)$ for $1\leq i \leq n$,
 and
$
 \varepsilon^{\beta_{n-1}}_{\mu_{n-1}} \ldots \varepsilon^{\beta_1}_{\mu_1}. e^\lambda
$
has the form
$
g \cdot e^{k_1\beta_1+\cdots+k_{n-1}\beta_{n-1}+\lambda}
$
with  $g\in \Sy{\ha(R')}$.
Then we have
$
v=\varepsilon^{\beta_n}_{\mu_n}. g\cdot e^{\delta'+\lambda}.
$
By the commutation relation
\begin{equation}
[h_k, (e^\alpha)_l]=\langle h,\alpha \rangle (e^\alpha)_{k+l}, \ \ \ \alpha \in L , h\in \mathfrak{h}, k,l \in \mathbb{Z}, \label{eqn:opee}
\end{equation}
of vertex operators of lattice VOAs, it follows that $\varepsilon^{\beta_n}_{\mu_n}. g$ is the sum of operators of the form
$
 h\cdot (e^{\beta_n})_{q_{k_n}} \ldots (e^{\beta_n})_{q_1}
$
with $h\in \Sy{\ha(R')}$ and $q_1,\ldots,q_{k_n} \in \mathbb{Z}$.
Since $\beta_n \in R''$, we have
$
(e^{\beta_n})_{q_{k_n}} \ldots (e^{\beta_n})_{q_1}.e^{\delta'+\lambda} \in (W(R'';\delta'+\lambda))^{k_n\beta_n}.
$
Therefore, $v\in X$.
Thus we have proved the lemma.
\end{proof}

Let $\lambda$ be an element of $L^\circ$.
Let $\gamma$ be an element of $L_+(R)$
and $\delta$ an element of $L(S)$.

For $\mu \in L(R,S)$,
consider the set 
\begin{eqnarray*}
T(\mu;\lambda) &=& \{ f\cdot (e^{\tau_l})_{i_l}\ldots(e^{\tau_1})_{i_1}.e^{\tau_0+\lambda} |
f\in \Sy{\ha(S)}, l\geq 0, i_1,\ldots,i_l\in \mathbb{Z}, \\
&& \ \ \ \ \ \tau_0\in L(S), \tau_1,\ldots,\tau_l \in R\sqcup S \sqcup (-S) \ \ \mbox{with} \\
&& \ \ \ \ \ \tau_0+\tau_1+\cdots +\tau_l=\mu
\}.
\end{eqnarray*}
Here, $- S=\{-\sigma \in L | \sigma \in S\}$.

\begin{lem}\label{sec:lemmonomial}
The set $T(\delta+\gamma;\lambda)$  
spans the vector space $W(R,S;\lambda)^{\delta+\gamma}$.
\end{lem}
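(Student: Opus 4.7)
The plan is to combine the standard spanning of $W(R,S;\lambda)$ by iterated modes of the generators with the commutation relation (\ref{eqn:opee}), and then to put every resulting monomial into the normal form required by $T(\delta+\gamma;\lambda)$. Since $W(R,S;\lambda)=W(R,S)\cdot e^\lambda$ is the cyclic module on $e^\lambda$ over a vertex subalgebra generated by a prescribed set, it is spanned by the vectors
\[
(x_k)_{m_k}(x_{k-1})_{m_{k-1}}\cdots (x_1)_{m_1}\, e^\lambda,
\]
where each $x_i$ ranges over the generating set $\{e^\rho,\, e^{\pm\sigma},\, \sigma_{-1}\otimes \mathbf{1} \mid \rho\in R,\ \sigma\in S\}$. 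The modes of $\sigma_{-1}\otimes \mathbf{1}$ are precisely the Heisenberg modes $\sigma_n$ with $\sigma\in S$, while the modes of the exponential generators are $(e^\tau)_n$ with $\tau\in R\sqcup S\sqcup(-S)$. Restricting to charge $\delta+\gamma$ merely forces the exponents $\tau_j$ of the exponential factors to sum to $\delta+\gamma$.

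Next, I would reorder each such monomial so that all Heisenberg modes appear to the left of all exponential modes. Using (\ref{eqn:opee}),
\[
\sigma_n (e^\tau)_m = (e^\tau)_m\, \sigma_n + \langle \sigma,\tau\rangle (e^\tau)_{n+m},
\]
together with the central Heisenberg bracket $[\sigma_n,\sigma'_m]=n\langle\sigma,\sigma'\rangle\delta_{n+m,0}$, each adjacent swap of a Heisenberg past an exponential either shifts the Heisenberg one position to the right or produces a ``collision'' term with one fewer Heisenberg factor and a single exponential factor whose exponent is unchanged but whose mode is shifted. A double induction, first on the number of adjacent inversions and then on the number of Heisenberg factors, reduces the spanning set to expressions of the shape $H\cdot E\cdot e^\lambda$, where $H$ is a polynomial in Heisenberg modes indexed by $S$ and $E=(e^{\tau_l})_{i_l}\cdots(e^{\tau_1})_{i_1}$ is a product of exponential modes with $\tau_j\in R\sqcup S\sqcup(-S)$ whose exponents sum to $\delta+\gamma$.

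Finally, I would purge $H$ of its non-negative Heisenberg factors. Each positive-mode factor $\sigma_n$ ($n>0$) can be commuted rightward using the same bracket: either it passes through $E$ entirely and then annihilates $e^\lambda$, or it collides with some exponential to produce a strictly shorter $H$ with a modified $E$ of the same total charge. A zero mode $\sigma_0$ commutes through every exponential (up to scalar shifts of the same element) and acts on $e^\lambda$ as $\langle \sigma,\lambda\rangle$, contributing only to the overall coefficient. An induction on the number of non-negative Heisenberg factors then expresses every spanning vector as a linear combination of vectors of the form $f\cdot (e^{\tau_l})_{i_l}\cdots(e^{\tau_1})_{i_1}\, e^\lambda$ with $f\in \Sy{\ha(S)}$ and $\sum_j \tau_j=\delta+\gamma$, which is precisely an element of $T(\delta+\gamma;\lambda)$ corresponding to the trivial choice $\tau_0=0\in L(S)$. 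The only real obstacle is the bookkeeping to verify that the two inductions terminate and preserve charge; this is immediate because each commutator either strictly decreases the number of inversions or strictly decreases the number of (non-negative) Heisenberg factors, while the total charge is preserved at every step.
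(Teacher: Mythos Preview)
Your argument is correct and follows the same route as the paper's: both show that already the subset of $T(\delta+\gamma;\lambda)$ with $\tau_0=0$ spans, by using (\ref{eqn:opee}) to commute the Heisenberg modes past the exponential modes; the paper simply compresses your two inductions into the single clause ``By (\ref{eqn:opee}), it follows that the subset spans $W(R,S;\lambda)^{\delta+\gamma}$.'' Two cosmetic points: to reach the normal form $H\cdot E\cdot e^\lambda$ you should be commuting Heisenberg factors \emph{leftward} (your displayed relation and the phrase ``to the right'' go the wrong way, though the same commutator of course works in either direction), and you have not verified the easy containment $T(\delta+\gamma;\lambda)\subset W(R,S;\lambda)^{\delta+\gamma}$, which the paper handles by observing $e^{\tau_0+\lambda}=\epsilon(\tau_0,\lambda)(e^{\tau_0})_{-1-\langle\tau_0,\lambda\rangle}e^\lambda$ for $\tau_0\in L(S)$.
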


\begin{proof}
For $\tau\in L(S)$,
we have
$
e^{\tau+\lambda}= \varepsilon (\tau,\lambda)(e^\tau)_{-1-\langle \tau,\lambda \rangle}e^\lambda.
$
Then it follows that each element of $T(\delta+\gamma;\lambda)$ belongs to $W(R,S;\lambda)^{\delta+\gamma}$.

Consider the subset
\begin{eqnarray*}
&& \{ f\cdot (e^{\tau_l})_{i_l}\ldots(e^{\tau_1})_{i_1}.e^{\lambda} |
f\in \Sy{\ha(S)}, l\geq 0, i_1,\ldots,i_l\in \mathbb{Z}, \\
&& \ \ \ \ \ \tau_1,\ldots,\tau_l \in R\sqcup S \sqcup (-S) \ \ \mbox{with} \ \ 
\tau_1+\cdots +\tau_l=\delta+\gamma
\}
\end{eqnarray*}
of $T(\delta+\gamma;\lambda)$.
By (\ref{eqn:opee}),
 it follows that the subset spans $W(R,S;\lambda)^{\delta+\gamma}$.
Hence the set $T(\delta+\gamma;\lambda)$ spans $W(R,S;\lambda)^{\delta+\gamma}$.
\end{proof}

\begin{prop}\label{sec:thm1}
\begin{equation*}
W(R,S;\lambda)^{\delta+\gamma}=\Sy{\ha(S)} \cdot W(R;\delta+\lambda)^{\gamma}. \label{eqn:thm1}
\end{equation*}
\end{prop}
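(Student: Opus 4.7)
The plan is to establish the two inclusions separately. The $\supseteq$ direction follows directly from the construction of $W(R,S;\lambda)$, while $\subseteq$ requires a length induction on the spanning set from LEMMA \ref{sec:lemmonomial}.

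For $\supseteq$, I would observe that $e^{\delta+\lambda}$ lies in $W(R,S;\lambda)$: writing $\delta$ as a $\mathbb{Z}$-combination of elements of $S$ and applying LEMMA \ref{sec:lem1}(ii) with all $\tau_i \in S \sqcup (-S)$ realizes $e^{\delta+\lambda}$ as a non-zero scalar multiple of a product of modes of $e^{\pm\sigma}$ acting on $e^{\lambda}$. Since modes of $e^\rho$ for $\rho \in R$ preserve $W(R,S;\lambda)$, one has $W(R;\delta+\lambda)^\gamma \subseteq W(R,S;\lambda)^{\delta+\gamma}$; and $\Sy{\ha(S)}$ acts via modes of $\sigma_{-1}\otimes \mathbf{1} \in W(R,S)$, so the whole right-hand side sits inside the left-hand side.

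For $\subseteq$, LEMMA \ref{sec:lemmonomial} reduces the task to showing every $v = f \cdot (e^{\tau_l})_{i_l}\cdots (e^{\tau_1})_{i_1} e^{\tau_0+\lambda} \in T(\delta+\gamma;\lambda)$ lies in $\Sy{\ha(S)} \cdot W(R;\delta+\lambda)^\gamma$. I would induct on the length $l$. When $l=0$ we must have $\tau_0 = \delta+\gamma$, and since $\tau_0 \in L(S)$, $\gamma \in L_+(R)$, and $L_+(R)\cap L(S) = \{0\}$, this forces $\gamma = 0$ and the claim is trivial. For the step, I examine the leftmost operator $(e^{\tau_l})_{i_l}$. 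If $\tau_l \in R$, then $v' = (e^{\tau_{l-1}})_{i_{l-1}}\cdots (e^{\tau_1})_{i_1}e^{\tau_0+\lambda}$ has $R$-charge $\gamma-\tau_l \in L_+(R)$ (the coefficient of $\tau_l$ in $\gamma$ counts its occurrences among the $\tau_j$, hence is at least $1$), so by induction $v' \in \Sy{\ha(S)} \cdot W(R;\delta+\lambda)^{\gamma-\tau_l}$; sliding $(e^{\tau_l})_{i_l}$ through the Heisenberg factor via $[\sigma(-k),(e^{\tau_l})_n] = \langle\sigma,\tau_l\rangle(e^{\tau_l})_{n-k}$ lands us in $\Sy{\ha(S)}\cdot W(R;\delta+\lambda)^\gamma$. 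If $\tau_l = \pm\sigma$, then IH gives $v' \in \Sy{\ha(S)} \cdot W(R;(\delta\mp\sigma)+\lambda)^\gamma$; after analogously sliding $(e^{\pm\sigma})_{i_l}$ past the Heisenberg prefix, the remaining problem is to show $(e^{\pm\sigma})_i$ sends $W(R;(\delta\mp\sigma)+\lambda)^\gamma$ into $\Sy{\ha(S)}\cdot W(R;\delta+\lambda)^\gamma$. The main contribution comes from moving $(e^{\pm\sigma})_i$ past the $(e^\rho)_?$'s and applying it to $e^{(\delta\mp\sigma)+\lambda}$, which by LEMMA \ref{sec:lem1}(ii) gives an element of $\Sy{\ha(S)}\otimes e^{\delta+\lambda}$ (the $E^-$-factor involves only $\sigma$-modes); sliding this Heisenberg part back to the left shifts the $(e^\rho)_?$ indices but preserves their form, producing an element of the desired shape.

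The main obstacle I expect is bookkeeping the commutator corrections $[(e^{\pm\sigma})_i,(e^\rho)_n] = \sum_{p\geq 0}\binom{i}{p}((e^{\pm\sigma})_p e^\rho)_{i+n-p}$ that arise while sliding $(e^{\pm\sigma})_i$ past the $R$-generators, since they introduce mixed-charge modes $(e^{\pm\sigma+\rho})_?$ lying outside $R\sqcup S\sqcup(-S)$. The cleanest remedy is to strengthen the induction hypothesis to allow operator labels $\tau_j \in L_+(R)\oplus L(S)$ (so each $\tau_j$ has non-negative $R$-component and arbitrary $S$-component); each commutation preserves this condition and strictly decreases the product length, so the enlarged induction closes, and the base case and both inductive cases above carry over verbatim to this broader spanning set.
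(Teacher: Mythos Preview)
Your overall architecture matches the paper's---two inclusions, with $\subseteq$ handled by induction on the spanning set of Lemma~\ref{sec:lemmonomial}---and the inclusion $\supseteq$ and the case $\tau_l\in R$ are fine. The gap is in the step where $(e^{\pm\sigma})_i$ must carry $W(R;(\delta\mp\sigma)+\lambda)^\gamma$ into $\Sy{\ha(S)}\cdot W(R;\delta+\lambda)^\gamma$, and your proposed remedy does not close. Enlarging the labels to $\tau_j\in L_+(R)\oplus L(S)$ is insufficient because the Borcherds commutator does more than relabel: $[(e^{\tau})_m,(e^{\rho'})_n]$ is a sum of modes of the vectors $(e^{\tau})_p\, e^{\rho'}$, which lie in $\mathbb{C}[\tau_{-1},\tau_{-2},\ldots]\otimes e^{\tau+\rho'}$ rather than in $\mathbb{C}\,e^{\tau+\rho'}$. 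After one iteration $\tau$ already has a nonzero $R$-component, so this Heisenberg dressing contains $\rho$-modes; these can be moved to the front only as elements of $\Sy{\ha(R)}$, not $\Sy{\ha(S)}$, and they do not preserve $W(R;\cdot)$ either (for instance $W(R;\mu)^0=\mathbb{C}\,e^{\mu}$ while $(\rho)_{-m}e^{\mu}\notin\mathbb{C}\,e^{\mu}$). So the enlarged induction lands outside the target, and in any case the two original cases do not ``carry over verbatim,'' since a general $\tau_l\in L_+(R)\oplus L(S)$ is neither a single $\rho\in R$ nor a single $\pm\sigma$.

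The paper avoids commuting $e^{\pm\sigma}$ past $e^{\rho}$ altogether. It inducts on the number of indices $j$ with $\tau_j\notin R$, picks the \emph{innermost} such $\tau_j$, and applies Lemma~\ref{sec:cor1prop1} (with $R'=\pm S$, $R''=R$) to the partial product $v'=(e^{\tau_j})_{i_j}\cdots(e^{\tau_1})_{i_1}e^{\tau_0}$. That lemma is precisely the sub-claim you need, and its proof rests on the Milas--Penn combinatorial basis of Lemma~\ref{sec:lemmp}: those basis elements are \emph{already ordered} with all $R'$-modes applied closest to the highest-weight vector, so the inner $R'$-block collapses via the explicit lattice vertex-operator formula to an element of $\Sy{\ha(R')}\otimes e^{\delta'+\lambda}$, and only the harmless Heisenberg commutator~(\ref{eqn:opee}) is needed to move this past the outer $R''$-modes. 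This ordered-basis input is the ingredient your commutation argument is missing.
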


\begin{proof}
We show the case $\lambda=0$.
That is, we show
$
W(R,S)^{\delta+\gamma}=\Sy{\ha(S)}\cdot W(R;\delta)^{\gamma}.
$
Denote the LHS by $X$ and the RHS by $Y$.
Note that both $X$ and $Y$ are free $\Sy{\ha(S)}$-modules.
We have $X \supset Y$,
since $W(R;\delta)\subset W(R,S)$.

Let us show $X \subset Y$.
Take $v\in T(\delta+\gamma;0)$.
By Lemma \ref{sec:lemmonomial}, it suffices to show
$
v\in Y.
$
 The element $v$ has the form 
$
f \cdot (e^{\tau_l})_{i_l}\ldots(e^{\tau_1})_{i_1}. e^{\tau_0}
$
with $f\in \Sy{\ha(S)}$, $l\geq 0$, $i_1,\ldots,i_l\in \mathbb{Z}$, $\tau_0 \in L(S)$ and $\tau_1,\ldots,\tau_l \in R\sqcup  S \sqcup (-S)$ with $\tau_0+\tau_1+\cdots +\tau_l=\delta+\gamma$.

If $\tau_1,\ldots,\tau_l\in R$, then $\tau_0=\delta$ and $\tau_1+\cdots+\tau_l=\gamma$, since $R\cap (S\sqcup (-S))=\emptyset$. 
Then it follows that $v\in Y$.

So assume that $j \in \{1,\ldots,l\}$ is the minimum number satisfying $\tau_j\not \in R$.
Then $\tau_j$ has the form
$
\tau_j=t\cdot \sigma_k
$
with $t\in \{\pm 1\}$ and $k\in \{1,\ldots,s\}$.
Put $R'=t\cdot S$ and $R''=R$.
Here, $t\cdot S=\{t\cdot \sigma_1,\ldots,t\cdot \sigma_s\}$.
Then $R'$ and $R''$ are disjoint.
Consider the vector
$
v'= (e^{\tau_j})_{i_j}  \cdots (e^{\tau_1})_{i_1} e^{\tau_0}.
$
(Note that $v=f\cdot (e^{\tau_l})_{i_l}\ldots(e^{\tau_{j+1}})_{i_{j+1}}.v'$.)
Then $v'\in (W(R'\sqcup R'';\tau_0))^{\tau_1+\cdots+\tau_j}$, since $\tau_j \in R'$ and $\tau_1,\ldots,\tau_{j-1} \in R''$.
Moreover, $\tau_j\in L_+(R')$, and $\tau_1+\cdots+\tau_{j-1}\in L_+(R'')$.
From Lemma \ref{sec:cor1prop1} and the relation $\Sy{\ha(t\cdot S)}=\Sy{\ha(S)}$, it follows that
$v' \in \Sy{\ha(S)}\cdot (W(R;\tau_0+\tau_j))^{\tau_1+\cdots+\tau_{j-1}}.
$
Therefore $v'$ is the sum of elements of the form
\[
g\cdot (e^{\tau'_{j-1}})_{i'_{j-1}}\ldots(e^{\tau'_1})_{i'_1}.e^{\tau_0+\tau_j}
\]
with $g\in\Sy{\ha(S)}$, $i'_1,\ldots,i'_{j-1}\in \mathbb{Z}$ and $\tau'_1,\ldots,\tau'_{j-1}\in R$ 
with $\tau'_1+\cdots+\tau'_{j-1}=\tau_1+\cdots+\tau_{j-1}$.
Therefore, to show $v\in Y$, it suffices to show that the vector
$
f \cdot (e^{\tau_l})_{i_l}\ldots(e^{\tau_{j+1}})_{i_{j+1}}.g\cdot (e^{\tau'_{j-1}})_{i'_{j-1}}\ldots(e^{\tau'_1})_{i'_1}.e^{\tau_0+\tau_j}
$
belongs to $Y$.

By (\ref{eqn:opee}), the operator $(e^{\tau_l})_{i_l}\ldots(e^{\tau_{j+1}})_{i_{j+1}}.g$ is the sum of operators of the form
$
h\cdot (e^{\tau_l})_{p_l}\cdots (e^{\tau_{j+1}})_{p_{j+1}}
$
with $h\in \Sy{\ha(S)}$ and $p_{j+1},\ldots,p_l \in \mathbb{Z}$.
Therefore, to show $v\in Y$, it suffices to show that the vector
\[
w=f\cdot h\cdot (e^{\tau_l})_{p_l}\ldots(e^{\tau_{j+1}})_{p_{j+1}}.(e^{\tau'_{j-1}})_{i'_{j-1}}\ldots(e^{\tau'_1})_{i'_1}.e^{\tau_0+\tau_j}
\]
belongs to $Y$.

Since $\tau_j \in S\sqcup (-S)$, the vector $w$ belongs to $T(\delta+\gamma;0)$ as the vector $v$.
Since $\tau_j \in S\sqcup (-S)$ and $\tau'_1,\ldots,\tau'_{j-1} \in R$, the number of elements of the set 
\[
\{\tau| \tau\not \in R, \tau\in\{\tau'_1,\ldots,\tau'_{j-1},\tau_{j+1},\ldots,\tau_l\} \}
\]
is fewer than the number of elements of the set
$
\{\tau| \tau\not \in R, \tau \in \{\tau_1,\ldots,\tau_l\}\}.
$
Thus, repeating this procedure, we have $v\in Y$, and the proof is complete.
\end{proof}

\begin{proof}[Proof of THEOREM \ref{sec:str}]
The assertion follows from Proposition \ref{sec:thm1} and the fact that the set of the charges of $W(R,S;\lambda)$ agrees with $L(R,S)$.
\end{proof}

\subsection*{Acknowledgments}
The author wishes to express his thanks to his advisor, Professor A.\ Matsuo for helpful advice and kind encouragement.
He also wishes to express his thanks to H.\ Yamauchi, H.\ Shimakura and M.\ Okumura for helpful conversations.

Note added: After finishing this work, we learned of a recent related work by Kaneko, Nagatomo and Sakai \cite{Kan4},
where the Kaneko-Zagier equations and the Mathur-Mukhi-Sen classification are studied in detail.


\begin{thebibliography}{100}

\bibitem{A}
Andrews, G.\ E.: The theory of partitions. Encyclopedia of Mathematics and Its Applications, Vol. 2. Addison-Wesley Publishing Co., Reading, Mass.-London-Amsterdam (1976)

\bibitem{A2}
Andrews, G.\ E.: $q$-Series: Their development and application in analysis, number theory, combinatorics, physics, and computer algebra,
  Regional Conf.\ Ser.\ in Math., no.\ {\bf 66} (1986)

\bibitem{AKS}
Ardonne, E, Kedem, R., Stone, M.: Fermionic characters of arbitrary highest-weight integrable $sl_{r+1}$-modules, Comm.\  Math.\ Phys.\ {\bf 264} 427-464 (2006)

\bibitem{BKMS}

Bannai, E., Koike, M., Munemasa, A., Sekiguti, J.: Klein's icosahedral equation and modular forms, preprint (1999)

\bibitem{Bax}
Baxter, R.\ J.: Exactly Solved Models in Statistical Mechanics, Academic Press, London and New York, (1982)

\bibitem{B}
Borcherds, R.\ E.: Vertex algebras, Kac-Moody algebras, and the Monster.
Proc.\ Nat.\ Acad.\ Sci.\ U.S.A.\ {\bf 83}, no. 19, 3068-3071 (1986)

\bibitem{CalLM3}
Calinescu, C., Lepowsky, J., Milas, A.:
Vertex-algebraic structure of the principal subspaces of level one modules for the untwisted affine Lie algebras of type A, D, E.
J.\ Algebra {\bf 323}, no.\ 1, 167-192 (2010)

\bibitem{CdM}
Cohen, A.\ M., de Man, R.: Computational evidence for Deligne's conjecture regarding exceptional Lie groups, C.\ R.\ Acad.\  Sci.\ Paris S\'{e}r.\ I Math.\ {\bf 322} 427-432 (1996)

\bibitem{CLM2}
Capparelli, S., Lepowsky, J., Milas A.: The Rogers-Selberg recursions, the Gordon-Andrews identities and intertwining operators, The Ramanujan Journal
{\bf 12}  379-397 (2006)

\bibitem{CoLM}
Cook, W.\ J., Li, H., Misra, K.\ C.: A recurrence relation for characters of highest weight integrable modules for affine Lie algebras, Comm.\ in Contemporary Math.\ 
{\bf 9},  no.\ 2, 121-133 (2007)

\bibitem{D}
Deligne, P.: La s\'{e}rie exceptionalle de groupes de Lie, C.\ R.\ Acad.\ Sci.\ Paris S\'{e}r.\ I Math.\ {\bf 322} 321-326 (1996)

\bibitem{FFJMM}
Feigin, B., Feigin, E., Jimbo, M., Miwa, T., Mukhin, E.: Principal $\widehat{\mathfrak{sl}_3}$ subspaces and quantum Toda Hamiltonian, arXiv:0707.1635. (2007)

\bibitem{FS}
Feigin, B.\ L., Stoyanovsky, A.\ V.: Quasi-particles models for the representations of Lie algebras and geometry of flag manifold.
arXiv:hep-th/9308079. (1993)

\bibitem{G}
Georgiev, G.\ N.: Combinatorial constructions of modules for infinite-dimensional Lie algebras, I. Principal subspace, J.\  Pure Appl.\ Algebra {\bf 112},  247-286 (1996)

\bibitem{GZ1}
Gel'fand, I.\ M., Zelevinsky, A.\ V.: Models of representations of classical groups and their hidden symmetries, (Russian) Funkt.\ Anal.\ i Prilozhen.\ 18 (3) 14-31 (1984)

\bibitem{I}
Ibukiyama, T.: Modular forms of rational weights and modular varieties, Abh.\ Math.\ Sem.\ Univ.\ Hamburg, {\bf 70} 315-339 (2000)

\bibitem{K2}
Kac, V.\ G.: Infinite dimensional Lie algebras. Third Edition. Cambridge University Press, Cambridge, UK (1990)

\bibitem{Kan}
Kaneko, M.: On modular forms of weight $(6n+1)/5$ satisfying a certain differential equation, Number Theory: Tradition and Modernization 97-102 (2006)

\bibitem{Kan2}
Kaneko, M., Koike, M.: On modular forms arising from a differential equations of hypergeometric type. Ramanujan J.\ {\bf 7}  145-164 (2003)

\bibitem{Kan4}
Kaneko, M., Nagatomo, K., Sakai, Y.: Modular forms and second order ordinary differential equations: applications to vertex operator algebras. Lett.\ Math.\ Phys.\ {\bf 103} 439-453 (2013)

\bibitem{Kan3}
Kaneko, M., Zagier, D.: Supersingular $j$-invariants, hypergeometric series, and Atkin's orthogonal polynomials. AMS/IP Stud.\ Adv.\ Math.\ {\bf 7}  97-126 (1998)

\bibitem{KKMM}
Kedem, R., Klassen, T.~R., McCoy, B.~M.,  Melzer, E.: Fermionic sum representations for conformal field theory characters,
Phys.\ Lett.\ B vol.\ {\bf 307} issue 1-2, 68-76 (1993)

\bibitem{LaM2}
Landsberg, J.\ M., Manivel, L.: Triality, exceptional Lie algebras, and Deligne dimension formulas, Adv.\ Math.\ {\bf 171} 59-85 (2002) 

\bibitem{LaM1}
Landsberg, J.\ M., Manivel, L.: The sextonions and $E_{7\frac{1}{2}}$, Adv.\ Math.\ {\bf 201} (1)  143-179 (2006)

\bibitem{Mat}
Matsuo, A.: Norton's trace formulae for the Griess algebra of a vertex operator algebra with larger symmetry, Commun.\ Math.\  Phys.\ {\bf 224} 565-591 (2001)

\bibitem{M}
Milas, A.: Ramanujan's ``Lost Notebook" and the Virasoro algebra, Comm.\ Math.\ Phys.\ vol.\ {\bf 251}, no.\ 3, 567-588 (2004)

\bibitem{MP}
Milas, A., Penn, M.: Lattice vertex algebras and combinatorial bases: general case and ${\cal W}$-algebras. 
New York J.\ Math.\ {\bf 18} 621-650 (2012)

\bibitem{MMS}
Mathur, S., Mukhi, S., Sen, A.:
On the classification of rational conformal field theories,
Phys.\ lett.\ B, Vol.\ {\bf 213}, Issue.\ 3, 303-308 (1988)

\bibitem{MMS2}
Mathur, S., Mukhi, S., Sen, A.:
Reconstruction of conformal field theories from modular geometry on the torus,
Nucl.\ Phys.\ B, Vol.\ {\bf 318} 483-540 (1989)

\bibitem{P1}
Primc, M.: Vertex operator construction of standard modules for $A_n^{(1)}$, Pacific J.\ Math.\ {\bf 162} 143-187 (1994)

\bibitem{S1}
Shtepin, V.\ V.: Intermediate Lie algebras and their finite-dimensional representations.
Russian Acad.\ Sci.\ Izv.\ Math.\ Vol.\ {\bf 43}, No.3 559-579 (1994)

\bibitem{SF}
Stoyanovskii, A.\ V., Feigin, B.\ L., Functional models for representations of current algebras and semi-infinite Schubert cells.
 Funct.\ Anal.\ Appl.\ {\bf 28}, no.\ 1, 55-72 (1994)

\bibitem{T2}
Tuite, M.\ P.: Exceptional vertex operator algebras and the Virasoro algebra, Contemp.\ Math.\ {\bf 497} 213-225 (2009)

\bibitem{W}
Wakimoto, M.: Lectures on infinite dimensional Lie algebras. World Scientific Publishing, Singapore (2001)

\end{thebibliography}
\end{document}